\newtheorem{theorem}{\textbf{\textsc{Theorem}}}[section]
\newtheorem{definition}[theorem]{\textbf{\textsc{Definition}}}
\newtheorem{corollary}[theorem]{\textbf{\textsc{Corollary}}}
\newtheorem{exampl}[theorem]{\textbf{\textsc{Example}}}
\newtheorem{remark}[theorem]{\textbf{\textsc{Remark}}}
\newtheorem{fact}[theorem]{\textbf{\textsc{Fact}}}
\newenvironment{proof}
{\noindent\mbox{\textsf{\textbf{\textsc{Proof}}}:}}
{\hfill{\scriptsize \mbox{\underline{\texttt{\em QED}\,}$\!\big|$}}\bigskip}
\title{On \textsc{G\"odel}'s ``Much Weaker'' Assumption}
\author{{\sc Saeed \  Salehi  } \\
\textrm{\large Department of Mathematics, Statistics, and Computer Science,   University of Tabriz,}\\ \textrm{\large  Bahman 29$^{\,th}$ Boulevard,  P.O.Box~51666--16471, Tabriz, IRAN.  \, E-mail:\!~\textsf{\normalsize salehipour@tabrizu.ac.ir}} }
\begin{document}

\maketitle

\pagestyle{fancy}
\lhead[ ]{ \thepage\quad{\sc Saeed Salehi} (\textgoth{2022}) }
\chead[ ]{ }
\rhead[ ]{ {\em On G{\scriptsize \"ODEL}'s \textup{``}Much Weaker\textup{''} Assumption} }
\lfoot[ ]{ }
\cfoot[ ]{ }
\rfoot[ ]{ }

\begin{abstract}
 G\"odelian sentences of a sufficiently strong  and recursively enumerable theory, constructed in {\sc G\"odel}'s 1931 ground breaking paper on the incompleteness theorems, are unprovable if the theory is consistent; however, they could be refutable. These sentences are independent
 when the theory is so-called $\omega$-consistent; a notion introduced by {\sc G\"odel}, which is stronger than (simple) consistency,  but ``much weaker'' than soundness.
 {\sc G\"odel} goes to great lengths to show in detail that $\omega$-consistency is stronger than consistency, but never shows,  or seems to forget to say, why it is much weaker than soundness. In this paper, we study this proof-theoretic notion and compare some of its properties with those of consistency and (variants of) soundness.

\bigskip

\noindent
{\bf Keywords}:
{\sc G\"odel}’s Incompleteness Theorem,
{\sc Rosser}'s Theorem,
Consistency,
$\omega$-Consistency,
Soundness.

\noindent
{\bf 2020 AMS MSC}:
03F40,   	
%Gödel numberings and issues of incompleteness
%03A05,  	
%%Philosophical and critical (Philosophical aspects of logic and foundations)
03F30.    	
%First-order arithmetic and fragments
%03C40.
%%Interpolation, preservation, definability
\end{abstract}
%

%\articledoi{https://doi.org/0123456789}
%\volumeissueyear{vv}{nn}{20yy}%
%\setcounter{page}{1}
% This sets the starting page number

\section{Introduction}\label{sec:intro}

{\Large I}{\large N \textsc{the}} \textsc{penultimate paragraph}  of the first section  of his 1931 celebrated paper  on the incompleteness theorems, {\sc G\"odel}  wrote

\begin{quote}
\vspace{-1em}
\begin{itemize}
\item[]
The method of proof just explained can clearly be applied to any formal system that, first, [...]  and in which, second, every provable formula is true in the interpretation considered. The purpose of carrying out the above proof with full precision in what follows is, among other things, to replace the second of the assumptions just mentioned by a purely formal and much weaker one.
\hfill \cite[p.~151]{Godel}
\end{itemize}
\vspace{-1em}
\end{quote}

He began the next section with the sentence, ``We now proceed to carry out with full precision the proof sketched above''.  It is clear then that {\sc G\"odel}  sketched his proof of the first incompleteness theorem in Section~1 of \cite{Godel} for the system of {\em Principia Mathematica}, and then noted that his method of proof works for any formal system that, first, is {\em sufficiently strong} (in today's terminology) and, second, is {\em sound} (with respect to the standard model of natural numbers). He then said that in the rest of the article the proof would be carried out with full precision, while the second assumption (that of soundness) was replaced by a ``purely formal and much weaker one''. This assumption was called $\omega$-consistency by him (see Definition~\ref{def:oc} below). A question pursued in this paper is the following:

\qquad \qquad {\em Why is the purely formal notion of $\omega$-consistency much weaker than soundness?}\,\footnote{\!See also \cite[p.~141, the paragraph after the proof of Proposition 19]{Isaacson}.}

One possible answer could be the pure formality of $\omega$-consistency itself!
 {\sc G\"odel} knew that soundness (or truth) is not purely formal (what we know today from {\sc Tarski}'s Undefinability Theorem); see e.g.\ \cite{Murawski}. And, $\omega$-consistency is purely formal (arithmetically definable, see Definition~\ref{def:oic} below). Since soundness implies $\omega$-consistency, and the latter is definable while the former is not, then $\omega$-consistency should be (much) weaker than soundness. Could {\sc G\"odel} have meant in the penultimate paragraph ``to replace
the
second of the assumptions just mentioned by a purely formal and (thus) much weaker one''? In the other words, could his reason for the weakness of  $\omega$-consistency in front of soundness be the pure formality (arithmetical definability) of the former (and undefinability of the latter)?

On the other hand, the independence of the G\"odelian sentences can be guaranteed by much weaker assumptions (much weaker than $\omega$-consistncy!); $1$-consistency is more than enough. Even {\sc G\"odel} mentioned in the last page of \cite{Godel} that the consistency of the theory with its (standard) Consistency Statement is sufficient (and also necessary, see e.g.\ \cite[Theorem~35]{Isaacson}).
These stronger (than simple consistency) assumptions were all removed by Rosser \cite{Rosser36} who showed in 1936 the independence of other sentences from consistent theories (which are recursively enumerable and contain sufficient  arithmetic).
Before going to technical details, let us  quote   {\sc Smory\'nski} about $\omega$-consistency:

\begin{quote}
\vspace{-1em}
\begin{itemize}
\item[]
One weakness of {\sc G\"odel}'s original work was his introduction of the semantic notion of $\omega$-consistency. I find this notion to be pointless, but I admit many proof theorists take it seriously.  \hfill  \cite[p.~158, Remark]{Smorynski}
\end{itemize}
\vspace{-1em}
\end{quote}

It is notable that some prominent logicians, of the caliber of {\sc Henkin} \cite{Henkin},   studied, and even generalized, the concept of $\omega$-consistency. Which is a {\em syntactic} (purely formal, proof-theoretic) notion; not ``semantic''! (see Remark~\ref{remark} below). However, we can agree with {\sc Smory\'nski} that $\omega$-consistency could be ``pointless'', and may be dismissed with.

\section{$\boldsymbol\omega$-Consistency and Some of Its Semantic Properties}\label{sec:sem}
Let us fix a sufficiently strong theory $\mathbb{P}$ over an arithmetical language (which contains, $+,\times$, and probably some other constant, relation, or function symbols). This could be {\sc Peano}'s Arithmetic, which is more than enough, or some of its weaker fragments, such as ${\rm I\Sigma_1}$.
\newline\centerline{\em All our theories are \textup{(usually {\sc re})} sets of arithmetical sentences that contain  $\mathbb{P}$.} Let us be given a fixed {\sc G\"odel} coding and arithmetization by which we have the provability predicate ${\tt Pr}_T(x)$, for a fixed coding of the theory $T$, saying that ``(the sentence with code) $x$ is $T$-provable''.  We assume familiarity with the notions of $\Sigma_m$ and $\Pi_m$ formulas.

\begin{definition}[{\rm $\omega$-Consistency}]
\label{def:oc}
\noindent

\noindent
The theory $T$ is called $\omega$-consistent, when there is {\em no} formula $\varphi(x)$, with the only free variable $x$, such that $T\vdash\neg\forall x\varphi(x)$ and $T\vdash\varphi(\overline{n})$ for each $n\!\in\!\mathbb{N}$. Here, $\overline{n}$ denotes the standard term that represents $n$.
\hfill\ding{71}\end{definition}

\begin{exampl}[{\rm of an $\omega$-consistent and an $\omega$-inconsistent theory}]\label{example}
\noindent

\noindent
Every sound theory is $\omega$-consistent. To see a natural $\omega$-{\em in}consistent theory, let us consider the negation of the (formal) Induction Principle. For a formula $\varphi(x)$, the formal Induction Principle of $\varphi$ is
\newline\centerline{${\rm IND}_\varphi:\quad \varphi(\overline{0})\wedge\forall x\,[\varphi(x)\!\rightarrow\!\varphi(x\!+\!\overline{1})]
\longrightarrow\forall x\,\varphi(x).$}
It is known that  the IND of formulas with smaller complexity do not imply the IND of formulas with higher complexity. So, $\neg{\rm IND}_\varphi$ could be consistent with some weak arithmetical theories, when $\varphi$ is a sufficiently complex formula. We show that $\neg{\rm IND}_\varphi$ entails an $\omega$-inconsistency.
First, note that $\neg{\rm IND}_\varphi\vdash\neg\forall x\varphi(x)$.
Second, we have
$\neg{\rm IND}_\varphi\vdash
\varphi(\overline{n})\!\rightarrow\!
\varphi(\overline{n\!+\!1})$ for every $n\!\in\!\mathbb{N}$, and so by (meta-)induction on $n$ one can show  $\neg{\rm IND}_\varphi\vdash
\varphi(\overline{n})$ for every $n\!\in\!\mathbb{N}$, noting that
 $\neg{\rm IND}_\varphi\vdash\varphi(\overline{0})$.  Therefore, $\neg{\rm IND}_\varphi$ is $\omega$-inconsistent.
\hfill\ding{71}\end{exampl}

We will use the following result of {\sc Isaacson}  \cite[Theorem 21]{Isaacson}, which is the $\omega$-version of {\sc Lindenbaum}'s Lemma:

\begin{fact}[$\omega\textrm{-}{\tt Con}_{T}\Longrightarrow\forall\psi\!\!: \omega\textrm{-}{\tt Con}_{T+\psi}\vee\omega\textrm{-}{\tt Con}_{T+\neg\psi}$]\label{fact:1}
\noindent

\noindent
If $T$ is $\omega$-consistent, then for every sentence $\psi$ either $T\!+\!\psi$ or $T\!+\!\neg\psi$ is $\omega$-consistent.
\hfill \ding{113}
\end{fact}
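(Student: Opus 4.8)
The plan is to prove the contrapositive: assuming that both $T+\psi$ and $T+\neg\psi$ are $\omega$-\emph{in}consistent, I will manufacture a single formula witnessing the $\omega$-inconsistency of $T$ itself. By Definition~\ref{def:oc}, the $\omega$-inconsistency of $T+\psi$ furnishes a formula $\varphi(x)$, with $x$ its only free variable, such that $T+\psi\vdash\neg\forall x\,\varphi(x)$ and $T+\psi\vdash\varphi(\overline{n})$ for every $n\in\mathbb{N}$; symmetrically, the $\omega$-inconsistency of $T+\neg\psi$ gives a formula $\chi(x)$ with $T+\neg\psi\vdash\neg\forall x\,\chi(x)$ and $T+\neg\psi\vdash\chi(\overline{n})$ for every $n\in\mathbb{N}$. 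Applying the deduction theorem throughout (legitimate without restriction, since $\psi$ is a sentence and so no generalization clashes with a free variable of the discharged hypothesis) turns each of these into a $T$-derivable conditional, for instance $T\vdash\psi\rightarrow\varphi(\overline{n})$ and $T\vdash\neg\psi\rightarrow\neg\forall x\,\chi(x)$.

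The key idea is to glue $\varphi$ and $\chi$ together using the sentence $\psi$ as a switch. I would set
\[ \theta(x):\equiv\bigl(\psi\wedge\varphi(x)\bigr)\vee\bigl(\neg\psi\wedge\chi(x)\bigr), \]
which again has $x$ as its only free variable, precisely because $\psi$ is closed. For the first half of the $\omega$-inconsistency condition I would argue by cases on the tautology $\psi\vee\neg\psi$: from $T\vdash\psi\rightarrow\varphi(\overline{n})$ one obtains $T\vdash\psi\rightarrow\theta(\overline{n})$ via the left disjunct, and symmetrically $T\vdash\neg\psi\rightarrow\theta(\overline{n})$ via the right disjunct, whence $T\vdash\theta(\overline{n})$ for every $n\in\mathbb{N}$.

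For the second half, namely $T\vdash\neg\forall x\,\theta(x)$, the decisive observation is that, since $x$ does not occur free in $\psi$, the sentence $\psi$ commutes with the universal quantifier. Under the hypothesis $\psi$ the matrix $\theta(x)$ is provably equivalent to $\varphi(x)$, so (generalizing, $x$ being absent from $\psi$) $\forall x\,\theta(x)$ is equivalent to $\forall x\,\varphi(x)$, giving $T\vdash\psi\rightarrow\neg\forall x\,\theta(x)$; under $\neg\psi$ the matrix collapses to $\chi(x)$, giving $T\vdash\neg\psi\rightarrow\neg\forall x\,\theta(x)$. A final appeal to excluded middle yields $T\vdash\neg\forall x\,\theta(x)$. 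Thus $\theta$ witnesses the $\omega$-inconsistency of $T$, contradicting the assumption, and the contrapositive—hence the \emph{fact}—is established. I expect this last manoeuvre, pushing $\psi$ across the quantifier, to be the one point demanding care, for it is exactly where the sentence-hood of $\psi$ (rather than that of an arbitrary formula) is indispensable.
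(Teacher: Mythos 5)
Your proof is correct and follows essentially the same route as the paper: both arguments glue the two witnesses of $\omega$-inconsistency into a single formula using the sentence $\psi$ as a switch, your $\bigl(\psi\wedge\varphi(x)\bigr)\vee\bigl(\neg\psi\wedge\chi(x)\bigr)$ being classically equivalent to the paper's witness $[\psi\!\rightarrow\!\alpha(x)]\wedge[\neg\psi\!\rightarrow\!\beta(x)]$, and both rely on the Deduction Theorem plus a case split on $\psi\vee\neg\psi$. If anything, your version spells out more carefully the point where sentence-hood of $\psi$ is needed to move it across the quantifier.
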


For a proof, assume (for the sake of a contradiction) that both $T\!+\!\psi$ and $T\!+\!\neg\psi$ are $\omega$-inconsistent. Then for some formulas $\alpha(x)$ and $\beta(x)$ we have
 $T\!+\!\psi\vdash\neg\forall x\,\alpha(x)$ and $T\!+\!\psi\vdash\alpha(\overline{n})$ for each $n\!\in\!\mathbb{N}$, also  $T\!+\!\neg\psi\vdash\neg\forall x\,\beta(x)$ and $T\!+\!\neg\psi\vdash\beta(\overline{n})$ for each $n\!\in\!\mathbb{N}$.
By Deduction Theorem we have $T\vdash\neg\forall x\,[\psi\!\rightarrow\!\alpha(x)]$ and $T\vdash\neg\forall x\,[\neg\psi\!\rightarrow\!\beta(x)]$, and so by Classical Logic we have
(I) $T\vdash\neg\forall x\,\big([\psi\!\rightarrow\!\alpha(x)]
\wedge[\neg\psi\!\rightarrow\!\beta(x)]\big)$.
Again, by Deduction Theorem, for every $n\!\in\!\mathbb{N}$ we have (II) $T\vdash[\psi\!\rightarrow\!\alpha(\overline{n})]
\wedge[\neg\psi\!\rightarrow\!\beta(\overline{n})]$. Now, (I) and (II) imply that $T$ is not $\omega$-consistent, which contradicts the assumption.
\hfill  {\scriptsize\texttt{\em QED}}

An interesting consequence of Fact~\ref{fact:1}, in the light of Example~\ref{example}, is that if $T$ is $\omega$-consistent then $T\!+\!{\rm IND}_\varphi$, for any formula $\varphi(x)$, is $\omega$-consistent too. So is the theory $T\!+\!\{{\rm IND}_{\varphi_1},\cdots,{\rm IND}_{\varphi_n}\}$, for any finite set of formulas $\{\varphi_1(x),\cdots,\varphi_n(x)\}$. Thus, any $\omega$-consistent theory is consistent with {\sc Peano}'s Arithmetic.

Our next observation is that $\omega$-consistency is preserved by adding true $\Sigma_3$-sentences; this was first proved for (adding true) $\Pi_1$-sentences in \cite[Theorem~22]{Isaacson} with a proof attributed
to {\sc Kreisel} 2005. Let us recall that our base theory $\mathbb{P}$ is $\Sigma_1$-complete, i.e., can prove every true $\Sigma_1$-sentence.

\begin{theorem}[$\omega\textrm{-}{\tt Con}_T\wedge\sigma\!\in\!\Sigma_3\textrm{-}{\rm Th}(\mathbb{N})\Longrightarrow\omega\textrm{-}{\tt Con}_{T+\sigma}\wedge\neg\omega\textrm{-}{\tt Con}_{T+\neg\sigma}$]\label{thm:sigma3}
\noindent

\noindent
If $T$ is an $\omega$-consistent theory and $\sigma$ is a true $\Sigma_3$-sentence, then the theory $T\!+\!\sigma$ is $\omega$-consistent and the theory $T\!+\!\neg\sigma$ is $\omega$-inconsistent.
\end{theorem}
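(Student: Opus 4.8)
The plan is to establish the two assertions in reverse order, because the $\omega$-inconsistency of $T\!+\!\neg\sigma$ is the substantive part, and the $\omega$-consistency of $T\!+\!\sigma$ will then fall out of Fact~\ref{fact:1} for free.

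First I would fix a $\Sigma_3$-presentation of the true sentence $\sigma$, say $\sigma=\exists x\,\psi(x)$ with $\psi\in\Pi_2$. Since $\sigma$ holds in $\mathbb{N}$, I can choose a genuine numerical witness $a\!\in\!\mathbb{N}$ with $\mathbb{N}\models\psi(\overline{a})$, so that $\psi(\overline{a})$ is a true $\Pi_2$-sentence; writing it as $\psi(\overline{a})=\forall y\,\varphi(y)$ with $\varphi(y)\!\in\!\Sigma_1$ (absorbing the now-fixed parameter $\overline{a}$). The reason for descending to exactly the $\Sigma_3$ level, and no higher, is that peeling off the outer existential witness and the universal quantifier leaves a $\Sigma_1$ matrix $\varphi$, which is precisely as much complexity as the $\Sigma_1$-completeness of $\mathbb{P}$ can digest.

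Next I would verify the two defining clauses of $\omega$-inconsistency (Definition~\ref{def:oc}) for $T\!+\!\neg\sigma$ using this $\varphi$. On the one hand, $\neg\sigma=\forall x\,\neg\psi(x)$ gives $T\!+\!\neg\sigma\vdash\neg\psi(\overline{a})$, that is, $T\!+\!\neg\sigma\vdash\neg\forall y\,\varphi(y)$. On the other hand, for each $m\!\in\!\mathbb{N}$ the sentence $\varphi(\overline{m})$ is a \emph{true} $\Sigma_1$-sentence (because $\psi(\overline{a})$ is true), hence is already provable in $\mathbb{P}\subseteq T$, so $T\!+\!\neg\sigma\vdash\varphi(\overline{m})$ for every $m$. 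These are exactly the two witnesses demanded by the definition, so $T\!+\!\neg\sigma$ is $\omega$-inconsistent; note that this half uses nothing about $T$ beyond $\mathbb{P}\subseteq T$, not the $\omega$-consistency hypothesis.

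Finally, for the $\omega$-consistency of $T\!+\!\sigma$ I would simply apply Fact~\ref{fact:1} to the sentence $\sigma$: since $T$ is $\omega$-consistent, at least one of $T\!+\!\sigma$ and $T\!+\!\neg\sigma$ is $\omega$-consistent, and as the latter has just been shown to be $\omega$-inconsistent, it must be $T\!+\!\sigma$. The only delicate point I anticipate is the bookkeeping of the quantifier prefix, ensuring that after fixing the witness $\overline{a}$ the innermost block genuinely lands in $\Sigma_1$ so that $\Sigma_1$-completeness applies verbatim; apart from that, the argument rests entirely on $\Sigma_1$-completeness and the already-established $\omega$-Lindenbaum lemma.
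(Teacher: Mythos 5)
Your proposal is correct and follows essentially the same route as the paper's own proof: extract a numerical witness for the outer existential, peel off the universal quantifier to reach a $\Sigma_1$ matrix handled by $\Sigma_1$-completeness, conclude that $T\!+\!\neg\sigma$ (indeed already $\mathbb{P}\!+\!\neg\sigma$) is $\omega$-inconsistent, and then invoke Fact~\ref{fact:1} to get the $\omega$-consistency of $T\!+\!\sigma$. Your remark that the $\omega$-inconsistency half needs nothing beyond $\mathbb{P}\subseteq T$ is a nice explicit observation that the paper leaves implicit.
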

\begin{proof}

\noindent
Write $\sigma=\exists x\,\pi(x)$ for a $\Pi_2$-formula $\pi$. Since $\sigma$ is true, then there exists some $k\!\in\!\mathbb{N}$ such that $\mathbb{N}\vDash\pi(\overline{k})$. Write $\pi(\overline{k})=\forall y\,\theta(y)$ for some $\Sigma_1$-formula $\theta$. Then for every $n\!\in\!\mathbb{N}$ we have $\mathbb{N}\vDash\theta(\overline{n})$. So, by the $\Sigma_1$-completeness of $\mathbb{P}$ we have  $\mathbb{P}\vdash\theta(\overline{n})$ for each $n\!\in\!\mathbb{N}$. Now, from $\neg\sigma\vdash\neg\pi(\overline{k})$ we have $\neg\sigma\vdash\neg\forall x\,\theta(x)$. Thus, $\mathbb{P}\!+\!\neg\sigma$ is $\omega$-inconsistent; so is $T\!+\!\neg\sigma$. Since $T$ is $\omega$-consistent, then by Fact~\ref{fact:1}, $T\!+\!\sigma$ must be $\omega$-consistent.
\end{proof}

Later, we will see that this result is optimal: adding a true $\Pi_3$-sentence to an $\omega$-consistent theory does not necessarily result in an $\omega$-consistent theory (see Corollary~\ref{cor:api3} below). Let us now note that $\omega$-consistency implies $\Pi_3$-soundness.

\begin{corollary}[$\omega\textrm{-}{\tt Con}_T\Longrightarrow\Pi_3\textrm{-}{\tt Sound}_T$]\label{cor:pi3}
\noindent

\noindent
Every $\omega$-consistent theory is $\Pi_3$-sound, i.e., every provable $\Pi_3$-sentence of it is true.
\end{corollary}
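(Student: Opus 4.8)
The plan is to derive the corollary directly from Theorem~\ref{thm:sigma3} by contraposition, so essentially no new machinery is needed. Suppose, for the sake of a contradiction, that $T$ is $\omega$-consistent and $T\vdash\pi$ for some $\Pi_3$-sentence $\pi$ that is \emph{false}, i.e.\ $\mathbb{N}\not\vDash\pi$. The first step is the routine syntactic observation that pushing the negation inward turns the $\Pi_3$-sentence $\pi$ into a (logically equivalent) $\Sigma_3$-sentence $\neg\pi$, and since $\pi$ is false, the sentence $\sigma:=\neg\pi$ is a \emph{true} $\Sigma_3$-sentence.

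Now I would simply feed $\sigma$ into Theorem~\ref{thm:sigma3}. Since $T$ is $\omega$-consistent and $\sigma$ is a true $\Sigma_3$-sentence, the theorem yields that $T\!+\!\neg\sigma$ is $\omega$-\emph{in}consistent; but $\neg\sigma$ is just $\pi$, so $T\!+\!\pi$ is $\omega$-inconsistent. The crucial (and only nontrivial) point is to notice that the hypothesis $T\vdash\pi$ makes the theory $T\!+\!\pi$ deductively identical with $T$ itself: adding an already-provable sentence changes nothing about the deductive closure, and hence nothing about $\omega$-consistency. Therefore $T$ itself is $\omega$-inconsistent, contradicting the assumption.

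The main obstacle, such as it is, lies entirely in this last identification: one must be sure that $\omega$-consistency, as defined in Definition~\ref{def:oc} purely in terms of what the theory proves, is insensitive to replacing $T$ by the deductively equivalent $T\!+\!\pi$. This is immediate from the definition (the relevant clauses $T\vdash\neg\forall x\,\varphi(x)$ and $T\vdash\varphi(\overline{n})$ depend only on the set of theorems), so the argument closes without further work. Concluding, every provable $\Pi_3$-sentence of an $\omega$-consistent $T$ must be true, which is exactly the assertion $\omega\textrm{-}{\tt Con}_T\Longrightarrow\Pi_3\textrm{-}{\tt Sound}_T$.
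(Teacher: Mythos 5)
Your proposal is correct and follows essentially the same route as the paper: both argue by contraposition, observing that a false provable $\Pi_3$-sentence $\pi$ would make $\neg\pi$ a true $\Sigma_3$-sentence to which Theorem~\ref{thm:sigma3} applies. The only (cosmetic) difference is that you invoke the ``$T\!+\!\neg\sigma$ is $\omega$-inconsistent'' half of that theorem together with the remark that $T\!+\!\pi$ has the same deductive closure as $T$, whereas the paper invokes the ``$T\!+\!\sigma$ is $\omega$-consistent'' half and contradicts it with the outright inconsistency of $T\!+\!\neg\pi$.
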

\begin{proof}

\noindent
If $T$ is $\omega$-consistent, and $\pi$ is a $\Pi_3$-sentence such that $T\vdash\pi$, then $\pi$ must be true, since otherwise $\neg\pi$ would be a true $\Sigma_3$-sentence, and so by Theorem~\ref{thm:sigma3} the theory $T\!+\!\neg\pi$ would be $\omega$-consistent, but this is a contradiction since $T\!+\!\neg\pi$ is inconsistent by the assumption $T\vdash\pi$.
\end{proof}

We now note that the notion of $\omega$-consistency is arithmetically definable.

\begin{definition}[$\mho_T(\varphi)$: {\rm the formula $\varphi$ is a witness for the $\omega$-inconsistency of $T$}]
\label{def:oic}
\noindent

\noindent
 Let $\mho_T(\varphi)$ be the following formula: ${\tt Pr}_T(\ulcorner\neg\forall v\varphi(v)\urcorner)\wedge\forall w\,{\tt Pr}_T(\ulcorner\varphi(\overline{w})\urcorner)$.
  Here $\ulcorner\alpha\urcorner$ (which is a term in the language of $\mathbb{P}$) denotes the {\sc G\"odel} code of the expression $\alpha$.
   Let $\omega\textrm{-}{\tt Con}_T$ be the sentence  $\neg\exists\chi\,\mho_T(\chi)$.
\hfill\ding{71}\end{definition}

We note that when $T$ is an {\sc re} theory, then ${\tt Pr}_T(x)$ is a $\Sigma_1$-formula, so $\mho_T(x)$ is a $\Pi_2$-formula, thus $\omega\textrm{-}{\tt Con}_T$ is a $\Pi_3$-sentence.

As far as we know, the first  proof of the weakness of $\omega$-consistency with respect to  soundness appeared in print at \cite{Kreisel}, what is referred to as ({\sc Kreisel} 1955) in \cite[Proposition~19]{Isaacson}.

\begin{definition}[{\rm Kreiselian $\Sigma_3$-Sentences of $T$, $\kappa$: I am $\omega$-inconsistent with $T$}]
\label{def:kappa}
\noindent

\noindent
For a theory $T$, any $\Sigma_3$-sentence $\kappa$ that satisfies $\mathbb{P}\vdash\kappa\!\leftrightarrow\!\neg
\omega\textrm{-}{\tt Con}_{T+\kappa}$ is called a Kreiselian sentence of $T$.
\hfill\ding{71}\end{definition}

If we think for a moment that $\omega$-consistency equals to soundness, then Kreiselian sentences correspond to the Liar sentences. Now, {\sc Kreisel}'s proof of the non-equality of $\omega$-consistency with soundness corresponds to the classical proof of {\sc Tarski}'s Undefinability Theorem. Let us note that by Diagonal Lemma there exist some Kreiselian sentences for any {\sc re} theory, be it $\omega$-consistent or not.

\begin{theorem}[$\omega\textrm{-}{\tt Con}_T\Longrightarrow\omega\textrm{-}{\tt Con}_{T+\kappa}\;\&\;
\mathbb{N}\nvDash\kappa$]\label{thm:kappa}
\noindent

\noindent
If $T$ is {\sc re} and $\omega$-consistent, and $\kappa$ is a Kreiselian sentence of $T$, then $\kappa$ is false and $T\!+\!\kappa$ is $\omega$-consistent.
\end{theorem}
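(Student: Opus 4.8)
The plan is to read the fixed-point defining equivalence of Definition~\ref{def:kappa} in the standard model and then invoke Theorem~\ref{thm:sigma3} to exclude the possibility that $\kappa$ is true. First I would record the basic semantic fact that, because the provability predicate ${\tt Pr}_{T+\kappa}$ represents $(T\!+\!\kappa)$-provability faithfully in $\mathbb{N}$ (that is, $\mathbb{N}\vDash{\tt Pr}_{T+\kappa}(\ulcorner\alpha\urcorner)$ if and only if $T\!+\!\kappa\vdash\alpha$), the $\Pi_3$-sentence $\omega\textrm{-}{\tt Con}_{T+\kappa}$ of Definition~\ref{def:oic} is true in $\mathbb{N}$ exactly when $T\!+\!\kappa$ is genuinely $\omega$-consistent. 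One should check here that the diagonal construction yielding $\kappa$ fixes a code naming precisely the theory $T$ augmented with this very sentence $\kappa$, so that the self-reference inside $\omega\textrm{-}{\tt Con}_{T+\kappa}$ is the intended one.

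Combining this with $\mathbb{P}\vdash\kappa\!\leftrightarrow\!\neg\omega\textrm{-}{\tt Con}_{T+\kappa}$ and with $\mathbb{N}\vDash\mathbb{P}$ (our base theory being true in the standard model), I obtain the key biconditional
\[
\mathbb{N}\vDash\kappa
\qquad\Longleftrightarrow\qquad
T\!+\!\kappa \ \text{is $\omega$-inconsistent.}
\]
Next I would rule out the truth of $\kappa$ by contradiction: assume $\mathbb{N}\vDash\kappa$. Then $T\!+\!\kappa$ is $\omega$-inconsistent by the biconditional. But $\kappa$ is then a \emph{true} $\Sigma_3$-sentence while $T$ is $\omega$-consistent, so Theorem~\ref{thm:sigma3} forces $T\!+\!\kappa$ to be $\omega$-consistent, a contradiction. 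Hence $\mathbb{N}\nvDash\kappa$, which is the first assertion.

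Finally, reading the same biconditional contrapositively, the falsity of $\kappa$ at once yields that $T\!+\!\kappa$ is $\omega$-consistent, the second assertion. The argument is brief once Theorem~\ref{thm:sigma3} is in hand, and I expect the only genuinely delicate point to be the semantic bookkeeping of the first paragraph: one must be certain that the arithmetized $\omega\textrm{-}{\tt Con}_{T+\kappa}$ is a faithful assertion about the actual theory $T\!+\!\kappa$ in $\mathbb{N}$ (representability of the provability predicate) and that the fixed point names the correct theory; the remainder is a direct application of the preservation of $\omega$-consistency under true $\Sigma_3$-sentences.
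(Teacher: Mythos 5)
Your proposal is correct and follows essentially the same route as the paper: read the fixed-point equivalence in $\mathbb{N}$ (via the soundness of $\mathbb{P}$ and the faithfulness of the arithmetized $\omega\textrm{-}{\tt Con}_{T+\kappa}$) to get that $\kappa$ is true iff $T\!+\!\kappa$ is $\omega$-inconsistent, refute the truth of $\kappa$ by Theorem~\ref{thm:sigma3}, and then conclude the $\omega$-consistency of $T\!+\!\kappa$ from the falsity of $\kappa$. Your extra care about the representability of the provability predicate and the correctness of the self-referential coding is a worthwhile elaboration of what the paper compresses into the phrase ``by Definition~\ref{def:kappa} and the soundness of $\mathbb{P}$,'' but it is not a different argument.
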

\begin{proof}

\noindent
If $\kappa$ were true,  then by Definition~\ref{def:kappa} and the soundness of $\mathbb{P}$, the theory $T\!+\!\kappa$ would be $\omega$-inconsistent. But by Theorem~\ref{thm:sigma3}, and the assumed truth of the $\Sigma_3$-sentence $\kappa$, the theory $T\!+\!\kappa$ should be $\omega$-consistent; a contradiction. Thus, $\kappa$ is false; and so, by the soundness of $\mathbb{P}$, the theory $T\!+\!\kappa$ is $\omega$-consistent.
\end{proof}

So, for an {\sc re} $\omega$-consistent theory $T$ and a Kreiselian sentence $\kappa$ of $T$, the theory $T\!+\!\kappa$ is $\omega$-consistent but not sound (since $\kappa$ is false); $T\!+\!\kappa$ is not even $\Sigma_3$-sound.

\begin{corollary}[$\omega\textrm{-}{\tt Con}_T\,\not\!\!\Longrightarrow\!\Sigma_3\textrm{-}{\tt Sound}_T,\; \Sigma_m\textrm{-}{\tt Sound}_T\,\not\!\!\Longrightarrow\!\omega\textrm{-}{\tt Con}_T$]\label{cor:s3n}
\noindent

\noindent
$\omega$-consistency does not imply $\Sigma_3$-soundness, and for any $m\!\in\!\mathbb{N}$, $\Sigma_m$-soundness does not imply $\omega$-consistency.
\end{corollary}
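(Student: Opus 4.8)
\emph{First non-implication} ($\omega$-consistency does not imply $\Sigma_3$-soundness). This is essentially the content of the remark following Theorem~\ref{thm:kappa}, so my plan is simply to instantiate it. I would take $T=\mathbb{P}$, which is sound and hence $\omega$-consistent by Example~\ref{example}, and which is {\sc re}. By the Diagonal Lemma there is a Kreiselian sentence $\kappa$ of $\mathbb{P}$ in the sense of Definition~\ref{def:kappa}. Theorem~\ref{thm:kappa} then yields that $\mathbb{P}\!+\!\kappa$ is $\omega$-consistent while $\kappa$ is false. Since $\kappa$ is a $\Sigma_3$-sentence and $\mathbb{P}\!+\!\kappa\vdash\kappa$, the theory $\mathbb{P}\!+\!\kappa$ proves a false $\Sigma_3$-sentence and is therefore not $\Sigma_3$-sound. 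This already witnesses the first non-implication, and no further work is needed here.

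\emph{Second non-implication} ($\Sigma_m$-soundness does not imply $\omega$-consistency, for each $m$). Here I would construct, for every $m$, an $\omega$-inconsistent theory that is nevertheless $\Sigma_m$-sound. The mechanism of Example~\ref{example} shows that a witness $\varphi$ for $\omega$-inconsistency needs $T\vdash\varphi(\overline{n})$ for all $n$ together with $T\vdash\neg\forall x\,\varphi(x)$; the only falsehood forced this way is the single sentence $\neg\forall x\,\varphi(x)$, so the idea is to push this falsehood just above level $\Sigma_m$ while guaranteeing that no false $\Sigma_m$-sentence is provable. Concretely, I would fix a \emph{true} $\Pi_{m+1}$-sentence $\forall x\,\varphi(x)$ (so that $\varphi\!\in\!\Sigma_m$) that is \emph{not} provable from $\mathbb{P}+{\rm Th}_{\Pi_m}(\mathbb{N})$, the extension of $\mathbb{P}$ by all true $\Pi_m$-sentences, and set $T_m = \mathbb{P} + \{\varphi(\overline{n}) : n\!\in\!\mathbb{N}\} + \neg\forall x\,\varphi(x)$. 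Note that $T_m$ is {\sc re}, since the family $\{\varphi(\overline{n})\}_{n}$ is computable, so the counterexamples respect the standing convention that theories are {\sc re}.

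\emph{Verification of the construction.} That $T_m$ is $\omega$-inconsistent is immediate: $T_m\vdash\varphi(\overline{n})$ for every $n$ and $T_m\vdash\neg\forall x\,\varphi(x)$, so $\varphi$ is a witness in the sense of Definition~\ref{def:oc}. For $\Sigma_m$-soundness I would invoke the standard criterion that a theory is $\Sigma_m$-sound exactly when it is consistent with ${\rm Th}_{\Pi_m}(\mathbb{N})$ (a false $\Sigma_m$-theorem would contradict a true $\Pi_m$-sentence, and conversely a finite inconsistency yields a provable false $\Sigma_m$-sentence). Since $\mathbb{P}+{\rm Th}_{\Pi_m}(\mathbb{N})\nvdash\forall x\,\varphi(x)$, the theory $\mathbb{P}+{\rm Th}_{\Pi_m}(\mathbb{N})+\neg\forall x\,\varphi(x)$ is consistent; any model $M$ of it satisfies every true $\Pi_m$-sentence, hence also every true $\Sigma_m$-sentence (each such sentence has a witness whose $\Pi_{m-1}\subseteq\Pi_m$ instance is true and so holds in $M$), and in particular $M\vDash\varphi(\overline{n})$ for all $n$. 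Thus $M\vDash T_m+{\rm Th}_{\Pi_m}(\mathbb{N})$, so $T_m$ is consistent with ${\rm Th}_{\Pi_m}(\mathbb{N})$ and therefore $\Sigma_m$-sound.

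\emph{The main obstacle.} The one nonelementary ingredient is the existence of the true but unprovable $\Pi_{m+1}$-sentence $\forall x\,\varphi$, which I would extract from the noncollapse of the arithmetical hierarchy. Provability in $\mathbb{P}+{\rm Th}_{\Pi_m}(\mathbb{N})$ is a $\Sigma_{m+1}$ predicate: it is an existential quantifier over proofs whose finitely many $\Pi_m$-axiom side conditions are each $\Pi_m$, so their conjunction is $\Pi_m$ and the whole statement is $\Sigma_{m+1}$. On the other hand the set of true $\Pi_{m+1}$-sentences is $\Pi_{m+1}$-complete; if every such sentence were provable then, the theory being sound and hence $\Pi_{m+1}$-sound, this $\Pi_{m+1}$-complete set would coincide with a $\Sigma_{m+1}$ set, collapsing $\Pi_{m+1}$ to $\Sigma_{m+1}$, which is false. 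Hence the desired $\forall x\,\varphi$ exists, and everything else is bookkeeping. (If one does not insist on {\sc re} theories, the plainer choice $\mathbb{P}+{\rm Th}_{\Pi_m}(\mathbb{N})+\neg\forall x\,\varphi$ already serves as the counterexample.)
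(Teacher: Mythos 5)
Your proposal is correct and follows essentially the same route as the paper: the first non-implication via a Kreiselian sentence of $\mathbb{P}$ (Theorem~\ref{thm:kappa} and the remark following it), and the second via the {\sc re} theory $\mathbb{P}+\{\sigma(\overline{n})\}_{n\in\mathbb{N}}+\neg\forall x\,\sigma(x)$ obtained from a true $\Pi_{m+1}$-sentence unprovable in $\mathbb{P}+\Pi_m\textrm{-}{\rm Th}(\mathbb{N})$, using the equivalence of $\Sigma_m$-soundness with consistency with $\Pi_m\textrm{-}{\rm Th}(\mathbb{N})$. The only difference is that you derive the existence of that unprovable true $\Pi_{m+1}$-sentence directly from the non-collapse of the arithmetical hierarchy, whereas the paper cites \cite[Theorem~2.5]{SalSer} for it.
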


\begin{proof}

\noindent
It is rather easy to see that $\Sigma_m$-soundness (the truth of provable $\Sigma_m$-sentences) is equivalent to consistency with $\Pi_m\textrm{-}{\rm Th}(\mathbb{N})$,  the set of true $\Pi_m$-sentences. By \cite[Theorem~2.5]{SalSer} there exists a true $\Pi_{m+1}$-sentence $\gamma$ such that $\mathbb{P}\!+\!\Pi_m\textrm{-}{\rm Th}(\mathbb{N})\nvdash\gamma$. So, the theory $U=\mathbb{P}\!+\!\Pi_m\textrm{-}{\rm Th}(\mathbb{N})\!+\!\neg\gamma$ is consistent. We show that $U$ is not $\omega$-consistent. Write $\gamma=\forall x\,\sigma(x)$ for some $\Sigma_m$-formula $\sigma$. By the truth of $\gamma$  we have $\Pi_m\textrm{-}{\rm Th}(\mathbb{N})\vdash\sigma(\overline{n})$ for each $n\!\in\!\mathbb{N}$. So, we have $U\vdash\neg\forall x\,\sigma(x)$ and $U\vdash\sigma(\overline{n})$ for each $n\!\in\!\mathbb{N}$. Thus, $U$ is not $\omega$-consistent, but it is $\Sigma_m$-sound (being consistent with $\Pi_m\textrm{-}{\rm Th}(\mathbb{N})$). However, $U$ is not {\sc re}; let us consider its sub-theory  $T=\mathbb{P}\!+\!\neg\forall x\,\sigma(x)\!+\!
\{\sigma(\overline{n})\}_{n\in\mathbb{N}}$. The theory $T$ is {\sc re} and $\Sigma_m$-sound, but not $\omega$-consistent.
\end{proof}

It is worth noting that while soundness implies $\omega$-consistency, $\Sigma_m$-soundness, even for large $m$'s, does not imply $\omega$-consistency.
We can now show the optimality of Theorem~\ref{thm:sigma3}.

\begin{corollary}[$\omega\textrm{-}{\tt Con}_T\wedge\pi\!\in\!\Pi_3\textrm{-}{\rm Th}(\mathbb{N})\,\not\!\!\Longrightarrow\!\omega\textrm{-}{\tt Con}_{T+\pi}$]\label{cor:api3}
\noindent

\noindent
Adding a true $\Pi_3$-sentence to an $\omega$-consistent theory does not necessarily result in an $\omega$-consistent theory.
\end{corollary}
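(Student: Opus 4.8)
The plan is to read off the example directly from the machinery already built around Kreiselian sentences, letting Theorem~\ref{thm:kappa} do the heavy lifting. Concretely, I would take the base $\omega$-consistent theory to be $\mathbb{P}\!+\!\kappa$ for a suitable Kreiselian sentence $\kappa$, and take the true $\Pi_3$-sentence to be $\neg\kappa$. The key realization is that Theorem~\ref{thm:kappa} hands us a theory that is $\omega$-consistent yet proves a \emph{false} sentence, so it is not sound; adding the true negation of that false sentence then collapses $\omega$-consistency.

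First I would fix an {\sc re} $\omega$-consistent theory, say $\mathbb{P}$ itself (it is sound, hence $\omega$-consistent by Example~\ref{example}), and invoke the Diagonal Lemma to obtain a Kreiselian $\Sigma_3$-sentence $\kappa$ of $\mathbb{P}$, whose existence is noted right after Definition~\ref{def:kappa}. By Theorem~\ref{thm:kappa}, the sentence $\kappa$ is false while the theory $\mathbb{P}\!+\!\kappa$ is $\omega$-consistent. This $\mathbb{P}\!+\!\kappa$ is precisely the $\omega$-consistent theory to which I will adjoin a true $\Pi_3$-sentence.

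Next I would observe that since $\kappa$ is a false $\Sigma_3$-sentence, its negation $\neg\kappa$ is a true $\Pi_3$-sentence. Adding it to $\mathbb{P}\!+\!\kappa$ yields $\mathbb{P}\!+\!\kappa\!+\!\neg\kappa$, which is plainly inconsistent, and any inconsistent theory is trivially $\omega$-inconsistent (it proves $\neg\forall x\varphi(x)$ and every $\varphi(\overline{n})$, for any $\varphi$). Hence the $\omega$-consistent theory $\mathbb{P}\!+\!\kappa$ together with the true $\Pi_3$-sentence $\neg\kappa$ produces an $\omega$-inconsistent theory, which establishes the claim and pins down the optimality of Theorem~\ref{thm:sigma3} (true $\Sigma_3$-sentences are safe to add, but true $\Pi_3$-sentences are not).

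I do not anticipate a genuine obstacle, since all the real content sits in Theorem~\ref{thm:kappa}: the whole force is that $\mathbb{P}\!+\!\kappa$ is $\omega$-consistent yet unsound, and the true $\Pi_3$-witness $\neg\kappa$ merely exposes this unsoundness by contradicting $\kappa$. The only care needed is the complexity bookkeeping---confirming that $\neg\kappa$ is indeed $\Pi_3$, which is immediate from $\kappa$ being $\Sigma_3$. If one prefers a more ``semantic''-looking witness, one can equally well add the true $\Pi_3$-sentence $\omega\textrm{-}{\tt Con}_{\mathbb{P}+\kappa}$, which over $\mathbb{P}$ is provably equivalent to $\neg\kappa$ by Definition~\ref{def:kappa}, and the same conclusion follows; the argument goes through verbatim with any {\sc re} $\omega$-consistent $T$ in place of $\mathbb{P}$.
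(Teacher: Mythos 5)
Your proposal is correct and is essentially identical to the paper's own proof: both take a Kreiselian sentence $\kappa$ of $\mathbb{P}$, use Theorem~\ref{thm:kappa} to conclude that $\mathbb{P}\!+\!\kappa$ is $\omega$-consistent while $\neg\kappa$ is a true $\Pi_3$-sentence, and note that $\mathbb{P}\!+\!\kappa\!+\!\neg\kappa$ is inconsistent, hence $\omega$-inconsistent. The extra remarks (the alternative witness $\omega\textrm{-}{\tt Con}_{\mathbb{P}+\kappa}$ and the generalization to arbitrary {\sc re} $\omega$-consistent $T$) are sound but not needed.
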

\begin{proof}

\noindent
Let $\kappa_0$ be a Kreiselian sentence of $\mathbb{P}$. Then, by Theorem~\ref{thm:kappa}, the theory  $T_0=\mathbb{P}\!+\!\kappa_0$ is $\omega$-consistent and $\neg\kappa_0$ is a true $\Pi_3$-sentence. But $T_0\!+\!\neg\kappa_0$ is not even consistent.
\end{proof}

Finally, we can show that adding a Kreiselian sentence or its negation to a sound theory results, in both cases, in an $\omega$-consistent theory (cf.\ Theorem~\ref{thm:rosser} below).

\begin{corollary}[$\mathbb{N}\vDash T\Longrightarrow\omega\textrm{-}{\tt Con}_{T+\kappa}\wedge\omega\textrm{-}{\tt Con}_{T+\neg\kappa}$]\label{cor:k}
\noindent

\noindent
If $T$ is a sound {\sc re} theory and $\kappa$ is a Kreiselian sentence of $T$, then both $T\!+\!\kappa$ and $T\!+\!\neg\kappa$ are $\omega$-consistent.
\end{corollary}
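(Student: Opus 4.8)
The plan is to prove the two conjuncts by entirely different routes, and the whole point is that $T$ is assumed to be not merely $\omega$-consistent but fully sound. First I would record that, since $T$ is sound, it is in particular $\omega$-consistent (every sound theory is $\omega$-consistent, as noted in Example~\ref{example}). This brings the full hypotheses of Theorem~\ref{thm:kappa} into force, since $T$ is moreover \textsc{re} and $\kappa$ is Kreiselian.

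For the theory $T\!+\!\kappa$ I would simply invoke Theorem~\ref{thm:kappa}: because $T$ is \textsc{re} and $\omega$-consistent and $\kappa$ is a Kreiselian sentence of $T$, that theorem delivers at once both that $\kappa$ is false, i.e.\ $\mathbb{N}\nvDash\kappa$, and that $T\!+\!\kappa$ is $\omega$-consistent. So the first conjunct is immediate and needs nothing beyond the already-established theorem; I would also keep in hand the by-product that $\kappa$ is false, as it drives the second half.

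For $T\!+\!\neg\kappa$ the argument must be genuinely different, and this is where soundness (rather than mere $\omega$-consistency) is indispensable. Since $\kappa$ is a $\Sigma_3$-sentence, $\neg\kappa$ is a true $\Pi_3$-sentence; hence Theorem~\ref{thm:sigma3}, which only governs the addition of true $\Sigma_3$-sentences, does \emph{not} apply here, and indeed Corollary~\ref{cor:api3} shows that adding a true $\Pi_3$-sentence to a merely $\omega$-consistent theory can destroy $\omega$-consistency. Instead I would argue semantically: $\kappa$ is false by the previous step, so $\mathbb{N}\vDash\neg\kappa$; and $T$ is sound, so $\mathbb{N}\vDash T$; therefore $\mathbb{N}\vDash T\!+\!\neg\kappa$, that is, $T\!+\!\neg\kappa$ is itself sound, and a sound theory is $\omega$-consistent by Example~\ref{example}.

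The only delicate point — the ``main obstacle'' in what is otherwise a very short deduction — is precisely the recognition that the $T\!+\!\neg\kappa$ half cannot be closed by the $\Sigma_3$-machinery of Theorem~\ref{thm:sigma3} and must instead be settled by the elementary semantic observation that soundness is preserved under adding true sentences. The contrast with Corollary~\ref{cor:api3} is exactly what flags this: weakening the hypothesis on $T$ from soundness down to bare $\omega$-consistency would break the second conjunct, so the use of $\mathbb{N}\vDash T$ here is essential and not a mere convenience.
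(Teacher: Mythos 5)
Your proof is correct and is essentially the paper's own argument, just spelled out in more detail: the paper likewise gets $\omega\textrm{-}{\tt Con}_{T+\kappa}$ directly from Theorem~\ref{thm:kappa} and gets $\omega\textrm{-}{\tt Con}_{T+\neg\kappa}$ by observing that $T\!+\!\neg\kappa$ is sound (since $\kappa$ is false by that same theorem and $T$ is sound). Your added remarks on why Theorem~\ref{thm:sigma3} cannot be used for the $\Pi_3$-sentence $\neg\kappa$ are accurate but not part of the paper's (one-line) proof.
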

\begin{proof}

\noindent
The theory $T\!+\!\neg\kappa$ is sound and the theory  $T\!+\!\kappa$ is $\omega$-consistent by Theorem~\ref{thm:kappa}.
\end{proof}

\section{Some Syntactic Properties of $\boldsymbol\omega$-Consistency}\label{sec:syn}

Let us begin this section, like the previous one, with another interesting result of {\sc Isaacson}  \cite[Theorem 20]{Isaacson}; see \cite[Proposition 3.2]{SalSer} for a generalization.

\begin{fact}[$\omega\textrm{-}{\tt Con}_{T}\wedge{\tt Complete}_T\Longrightarrow T\!=\!{\rm Th}(\mathbb{N})$]\label{fact:2}
\noindent

\noindent
True Arithmetic, ${\rm Th}(\mathbb{N})$, is the only  $\omega$-consistent  theory which is complete.
\hfill \ding{113}
\end{fact}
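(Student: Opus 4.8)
The plan is to prove that $\mathrm{Th}(\mathbb{N})$ is the unique complete $\omega$-consistent theory. Two directions are needed. First, $\mathrm{Th}(\mathbb{N})$ itself is both complete (for every sentence $\psi$, exactly one of $\psi$, $\neg\psi$ is true, hence in $\mathrm{Th}(\mathbb{N})$) and $\omega$-consistent (it is sound, being exactly the set of true sentences, and every sound theory is $\omega$-consistent by Example~\ref{example}). So it is a legitimate candidate. The substantive content is uniqueness: if $T$ is complete and $\omega$-consistent, then $T = \mathrm{Th}(\mathbb{N})$.

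For the uniqueness direction I would first show $T \subseteq \mathrm{Th}(\mathbb{N})$, i.e.\ that every theorem of $T$ is true. This is exactly $\Pi_n$-soundness for all $n$ together with the dual $\Sigma_n$-soundness, so it suffices to show $T$ is sound. The key leverage is completeness: for any sentence $\psi$, since $T$ is complete, $T$ decides $\psi$. The plan is to argue by induction on the logical complexity (the $\Sigma_n/\Pi_n$ rank) of $\psi$ that $T\vdash\psi$ implies $\mathbb{N}\vDash\psi$. The base case uses that $\mathbb{P}\subseteq T$ decides all atomic and bounded sentences correctly. For the inductive step the decisive tool is $\omega$-consistency applied to quantifiers: if $T\vdash\exists x\,\varphi(x)$ but $\mathbb{N}\nvDash\exists x\,\varphi(x)$, then every instance $\varphi(\overline{n})$ is false, so by the inductive hypothesis (and completeness) $T\vdash\neg\varphi(\overline{n})$ for every $n$, while $T\vdash\neg\forall x\,\neg\varphi(x)$; this is precisely an $\omega$-inconsistency witnessed by $\neg\varphi$, contradicting $\omega\textrm{-}{\tt Con}_T$.

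Once soundness ($T\subseteq\mathrm{Th}(\mathbb{N})$) is established, the reverse inclusion $\mathrm{Th}(\mathbb{N})\subseteq T$ follows immediately from completeness: given any true sentence $\psi$, completeness of $T$ forces $T\vdash\psi$ or $T\vdash\neg\psi$; the latter would give $\neg\psi\in\mathrm{Th}(\mathbb{N})$ by the soundness just proved, contradicting $\mathbb{N}\vDash\psi$. Hence $T\vdash\psi$, so $\mathrm{Th}(\mathbb{N})\subseteq T$, and combined with the first inclusion we get $T=\mathrm{Th}(\mathbb{N})$.

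The main obstacle is cleanly setting up the induction that $\omega$-consistency plus completeness yields full soundness. The subtlety is that $\omega$-consistency by itself only controls the pattern ``$\neg\forall x\,\varphi$ together with all $\varphi(\overline{n})$'', so one must carefully convert a false existential (or false universal) sentence into exactly this forbidden configuration, using completeness to upgrade the inductive hypothesis ``$\varphi(\overline{n})$ false $\Rightarrow$ not provable'' into ``$\Rightarrow$ refutable''. Handling the quantifier-alternation bookkeeping so that the inductive hypothesis applies to the instances $\varphi(\overline{n})$ (which have strictly lower rank) is where care is required; everything else is routine logical manipulation already licensed by the Deduction Theorem and classical logic, exactly as in the proof of Fact~\ref{fact:1}.
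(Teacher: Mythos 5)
Your proof is correct and is essentially the paper's argument in contrapositive form: the paper runs the same induction on the arithmetical hierarchy to show every true $\Sigma_m/\Pi_m$-sentence is $T$-provable (using $\omega$-consistency plus completeness at the universal-quantifier step, bootstrapping from $\Pi_3$-soundness via Corollary~\ref{cor:pi3}), whereas you run it in the dual direction to show every $T$-provable sentence is true, with the identical use of completeness to upgrade unprovability to refutability and of $\omega$-consistency to block a provable false existential. The two inclusions are interchangeable here since $T$ is complete and consistent, so the difference is only packaging.
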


%For a proof, let $T$ be   a complete and $\omega$-consistent theory;  by Corollary~\ref{cor:pi3} $T$ is $\Pi_3$-sound. So, we have    $(\mathcal{C}_2)$
%$T\vdash(\Sigma_2\!\cup\!\Pi_2)\textrm{-}{\rm Th}(\mathbb{N})$; since if $\eta\!\in\!(\Sigma_2\!\cup\!\Pi_2)\textrm{-}{\rm Th}(\mathbb{N})$ and $T\nvdash\eta$, then by the completeness of $T$ we would have $T\vdash\neg\eta$ which would contradict the $\Pi_3$-soundness of $T$. 
%We now show, by induction on $m$,  that $(\mathcal{C}_m)$ $T\vdash(\Sigma_m\!\cup\!\Pi_m)\textrm{-}{\rm Th}(\mathbb{N})$ holds for each $m\!\in\!\mathbb{N}$. For proving $(\mathcal{C}_m\!\Rightarrow\!\mathcal{C}_{m+1})$ suppose  that $(\mathcal{C}_m)$ holds.
%It is easy to see that $\Pi_m\textrm{-}{\rm Th}(\mathbb{N})\vdash\Sigma_{m+1}\textrm{-}{\rm Th}(\mathbb{N})$; so by $(\mathcal{C}_m)$ we already have $T\vdash\Sigma_{m+1}\textrm{-}{\rm Th}(\mathbb{N})$. We now show $T\vdash\Pi_{m+1}\textrm{-}{\rm Th}(\mathbb{N})$. Let $\pi$ be a true $\Pi_{m+1}$-sentence; write $\pi\!=\!\forall x\,\sigma(x)$ for some  $\Sigma_m$-formula $\sigma$. For every $n\!\in\!\mathbb{N}$ we have $\mathbb{N}\vDash\sigma(\overline{n})$, so $\Sigma_m\textrm{-}{\rm Th}(\mathbb{N})\vdash\sigma(\overline{n})$, thus by  $(\mathcal{C}_m)$ we have $T\vdash\sigma(\overline{n})$. Now,   the $\omega$-consistency of $T$ implies $T\nvdash\neg\forall x\,\sigma(x)$; so $T\vdash\forall x\,\sigma(x)$, thus  $T\vdash\pi$, follows from the completeness of $T$.
%\hfill {\scriptsize\texttt{\em QED}}

For a proof, note that a complete and $\omega$-consistent theory $T$ is $\Pi_3$-sound by Corollary~\ref{cor:pi3}. So, we have    $(\mathcal{C}_2)$
$T\vdash\Sigma_2\textrm{-}{\rm Th}(\mathbb{N})\!\cup\!\Pi_2\textrm{-}{\rm Th}(\mathbb{N})$; since if $\eta\!\in\!\Sigma_2\textrm{-}{\rm Th}(\mathbb{N})\!\cup\!\Pi_2\textrm{-}{\rm Th}(\mathbb{N})$ and $T\nvdash\eta$, then $T\vdash\neg\eta$, by the completeness of $T$, which would contradict the $\Pi_3$-soundness of $T$.
We now show, by induction on $m$,  that $(\mathcal{C}_m)$ $T\vdash\Sigma_m\textrm{-}{\rm Th}(\mathbb{N})\!\cup\!\Pi_m\textrm{-}{\rm Th}(\mathbb{N})$. For proving $(\mathcal{C}_m\!\Rightarrow\!\mathcal{C}_{m+1})$ suppose  that $(\mathcal{C}_m)$ holds.
It is easy to see that $\Pi_m\textrm{-}{\rm Th}(\mathbb{N})\vdash\Sigma_{m+1}\textrm{-}{\rm Th}(\mathbb{N})$; so by $(\mathcal{C}_m)$ we already have $T\vdash\Sigma_{m+1}\textrm{-}{\rm Th}(\mathbb{N})$. We now show $T\vdash\Pi_{m+1}\textrm{-}{\rm Th}(\mathbb{N})$. Let $\pi$ be a true $\Pi_{m+1}$-sentence; write $\pi\!=\!\forall x\,\sigma(x)$ for some  $\Sigma_m$-formula $\sigma$. For every $n\!\in\!\mathbb{N}$ we have $\mathbb{N}\vDash\sigma(\overline{n})$, so $\Sigma_m\textrm{-}{\rm Th}(\mathbb{N})\vdash\sigma(\overline{n})$, thus by  $(\mathcal{C}_m)$ we have $T\vdash\sigma(\overline{n})$. Now,   the $\omega$-consistency of $T$ implies $T\nvdash\neg\forall x\,\sigma(x)$; so from the completeness of $T$ we have $T\vdash\forall x\,\sigma(x)$, thus  $T\vdash\pi$.
\hfill {\scriptsize\texttt{\em QED}}

\begin{corollary}[$\lim_\subseteq\omega\textrm{-}{\tt Con}\neq\omega\textrm{-}{\tt Con}$]\label{cor:lindenb}
\noindent

\noindent
The limit (union) of a chain of $\omega$-consistent theories is not necessarily $\omega$-consistent.
\end{corollary}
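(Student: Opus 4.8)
The plan is to exhibit a single chain $T_0\subseteq T_1\subseteq\cdots$ of $\omega$-consistent theories whose union is a \emph{complete} theory different from ${\rm Th}(\mathbb{N})$; by Fact~\ref{fact:2} such a union cannot be $\omega$-consistent, which is exactly what the corollary asserts. In effect this will show that the $\omega$-analogue of {\sc Lindenbaum}'s Lemma fails: one cannot in general extend an $\omega$-consistent theory to a \emph{complete} $\omega$-consistent one, even though Fact~\ref{fact:1} lets us decide each individual sentence while preserving $\omega$-consistency. The engine of the deviation from truth will be a Kreiselian sentence, whose adjunction keeps the theory $\omega$-consistent (Theorem~\ref{thm:kappa}) while rendering it unsound.

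First I would set $T_0=\mathbb{P}$, which is $\omega$-consistent because it is sound (Example~\ref{example}). Since $\mathbb{P}$ is {\sc re}, it possesses a Kreiselian sentence $\kappa$, and Theorem~\ref{thm:kappa} yields that $\kappa$ is false while $T_1=\mathbb{P}\!+\!\kappa$ is $\omega$-consistent. From $T_1$ onward I would run the usual {\sc Lindenbaum} construction relative to $\omega$-consistency: fixing an enumeration of all sentences, at each subsequent stage I would use Fact~\ref{fact:1} to adjoin whichever of the next sentence or its negation keeps the theory $\omega$-consistent. By construction every $T_i$ is $\omega$-consistent, and the theories form a $\subseteq$-chain.

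Let $T_\infty=\bigcup_i T_i$. The theory $T_\infty$ is complete, since every sentence is decided at some finite stage, and it is consistent, being the union of a chain of (in particular) consistent theories. Moreover $\kappa\in T_1\subseteq T_\infty$ and $\kappa$ is false, so $T_\infty\neq{\rm Th}(\mathbb{N})$. Since $T_\infty$ is complete, Fact~\ref{fact:2} forces $T_\infty$ to be $\omega$-inconsistent, and we are done: $T_\infty$ is the limit of a chain of $\omega$-consistent theories yet is itself not $\omega$-consistent.

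The one point that needs care, and which I regard as the crux, is producing a genuine, irreversible departure from ${\rm Th}(\mathbb{N})$ at a finite stage without destroying $\omega$-consistency; Fact~\ref{fact:1} on its own never forces a false choice, so I rely essentially on Theorem~\ref{thm:kappa} to supply a \emph{false} sentence $\kappa$ whose adjunction provably preserves $\omega$-consistency. It is worth noting that the continuation steps invoke only Fact~\ref{fact:1}, which makes no recursiveness demand, so the construction goes through even though $T_\infty$ will certainly not be {\sc re}; the sole place where recursive enumerability is used is at the bottom of the chain, to guarantee that $\mathbb{P}$ has a Kreiselian sentence at all.
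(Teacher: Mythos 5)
Your proof is correct and follows essentially the same route as the paper's: start from an unsound but $\omega$-consistent theory $\mathbb{P}+\kappa$ obtained via a Kreiselian sentence and Theorem~\ref{thm:kappa}, extend stage by stage to a complete union using Fact~\ref{fact:1}, and conclude from Fact~\ref{fact:2} that the union cannot be $\omega$-consistent since it contains the false $\kappa$. The only difference is the inessential choice of starting the chain at $\mathbb{P}$ rather than at $\mathbb{P}+\kappa$.
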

\begin{proof}

\noindent
Let $\kappa_0$ be a Kreiselian sentence of $\mathbb{P}$ and put $T_0=\mathbb{P}\!+\!\kappa_0$. Then $T_0$ is $\omega$-consistent by Theorem~\ref{thm:kappa}. Now, by Fact~\ref{fact:1} one can expand $T_0$ in stages $T_0\!\subseteq\!T_1\!\subseteq\!T_2\!\subseteq\!\cdots$ in a way that each $T_m$ is $\omega$-consistent and their union $T^\ast=\bigcup_mT_m$ is complete. But by Fact~\ref{fact:2}, $T^\ast$ cannot be $\omega$-consistent  since $\mathbb{N}\nvDash\kappa_0$ by Theorem~\ref{thm:kappa} and so $T^\ast\neq{\rm Th}(\mathbb{N})$. Thus, the limit $T^\ast$ of the chain $\{T_m\}_m$ of $\omega$-consistent theories  is not   $\omega$-consistent.
\end{proof}

\begin{remark}[{\rm Non-Semanticity of $\omega$-Consistency}]\label{remark}
\noindent

\noindent
Fact~\ref{fact:2} enables us to show that $\omega$-consistency is not a semantic (model-theoretic) notion.
Assume, for the sake of a contradiction,  that for a class $\mathscr{C}$ of structures (over the language of $\mathbb{P}$) and for every theory $T$,

\qquad \qquad (\ding{75}) \quad $T$ is $\omega$-consistent if and only if $\mathcal{M}\!\vDash\!T$ for some $\mathcal{M}\!\in\!\mathscr{C}$.

A candidate for such a $\mathscr{C}$ that comes to mind naturally is the class of $\omega$-type structures: a model $\mathfrak{A}$ is called $\omega$-type, when there is no formula $\varphi(x)$ such that $\mathfrak{A}\!\vDash\!\neg\forall x\,\varphi(x)$ and at the same time $\mathfrak{A}\!\vDash\!\varphi(\overline{n})$ for every $n\!\in\!\mathbb{N}$. It is clear that if a theory has an $\omega$-type model, then it is an $\omega$-consistent theory.

For showing the impossibility of (\ding{75}), take $T_0$ to be an unsound $\omega$-consistent theory (as in e.g.\ the proof of Corollary~\ref{cor:lindenb}) and assume that for $\mathcal{M}_0\!\in\!\mathscr{C}$ we have  $\mathcal{M}_0\!\vDash\!T_0$. Then the full first-order theory ${\rm Th}(\mathcal{M}_0)$ of $\mathcal{M}_0$, the set of all sentences that are true in $\mathcal{M}_0$, is a complete $\omega$-consistent theory. So, by Fact~\ref{fact:2}, ${\rm Th}(\mathcal{M}_0)$ should be equal to ${\rm Th}(\mathbb{N})$; thus $\mathcal{M}_0\equiv\mathbb{N}$ whence  $\mathbb{N}\!\vDash\!T_0$, a contradiction.
\hfill\ding{71}\end{remark}

We now show that Theorem~\ref{thm:sigma3} can be formalized in $\mathbb{P}$.

\begin{theorem}[$\sigma\!\in\!\Sigma_3\Longrightarrow
\mathbb{P}\vdash\sigma\!\wedge\!\omega\textrm{-}{\tt Con}_T\!\rightarrow\!\omega\textrm{-}{\tt Con}_{T+\sigma}$]\label{thm:formal}
\noindent

\noindent
For every $\Sigma_3$-sentence $\sigma$ and any theory $T$  we have $\mathbb{P}\vdash\sigma\!\wedge\!\omega\textrm{-}{\tt Con}_T\!\rightarrow\!\omega\textrm{-}{\tt Con}_{T+\sigma}$.
\end{theorem}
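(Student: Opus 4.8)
The plan is to internalise inside $\mathbb{P}$ the arguments behind Theorem~\ref{thm:sigma3} and Fact~\ref{fact:1} simultaneously, working throughout with the arithmetised witness formula $\mho$ of Definition~\ref{def:oic}. Reasoning inside $\mathbb{P}$, I would assume $\sigma$ and $\omega\textrm{-}{\tt Con}_T$ and, for a contradiction, assume that some $\chi$ satisfies $\mho_{T\!+\!\sigma}(\chi)$, i.e.\ ${\tt Pr}_{T\!+\!\sigma}(\ulcorner\neg\forall v\chi(v)\urcorner)$ together with $\forall w\,{\tt Pr}_{T\!+\!\sigma}(\ulcorner\chi(\overline w)\urcorner)$. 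The aim is then to manufacture, provably in $\mathbb{P}$ and under these hypotheses, a single formula $\gamma(v)$ for which $\mho_T(\gamma)$ holds, contradicting $\omega\textrm{-}{\tt Con}_T$; this merges the role of Fact~\ref{fact:1} directly into the construction, so that no formalised Lindenbaum step is needed as a separate lemma.

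Three formalised tools carry the argument. First, the formalised Deduction Theorem $\mathbb{P}\vdash{\tt Pr}_{T\!+\!\sigma}(\ulcorner\alpha\urcorner)\!\leftrightarrow\!{\tt Pr}_T(\ulcorner\sigma\!\rightarrow\!\alpha\urcorner)$ converts the contradiction hypotheses into (i) ${\tt Pr}_T(\ulcorner\sigma\!\rightarrow\!\neg\forall v\chi(v)\urcorner)$ and (ii) $\forall w\,{\tt Pr}_T(\ulcorner\sigma\!\rightarrow\!\chi(\overline w)\urcorner)$. Second, writing $\sigma=\exists x\,\forall y\,\theta(x,y)$ with $\theta\!\in\!\Sigma_1$ and using the assumption $\sigma$ to fix a witness $x_0$ with $\forall y\,\theta(x_0,y)$, I invoke provable $\Sigma_1$-completeness, $\mathbb{P}\vdash\forall u\forall w\,[\theta(u,w)\!\rightarrow\!{\tt Pr}_T(\ulcorner\theta(\dot u,\overline w)\urcorner)]$ (where $\dot u$ denotes the numeral of $u$), to obtain (iii) $\forall w\,{\tt Pr}_T(\ulcorner\theta(\dot{x_0},\overline w)\urcorner)$. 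Third, the logical validity $\forall y\,\theta(\overline k,y)\!\rightarrow\!\sigma$ (existential generalisation from $\pi$), transported under the provability predicate uniformly in the variable and instantiated at $x_0$, gives (iv) ${\tt Pr}_T(\ulcorner\forall v\,\theta(\dot{x_0},v)\!\rightarrow\!\sigma\urcorner)$.

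With these in hand I would set $\gamma(v):=\theta(\dot{x_0},v)\wedge[\sigma\!\rightarrow\!\chi(v)]$, whose {\sc G\"odel} code is produced as a term in $x_0$ and $\chi$ through the arithmetised substitution function. The conjuncts (ii) and (iii) combine provably to $\forall w\,{\tt Pr}_T(\ulcorner\gamma(\overline w)\urcorner)$, the second half of $\mho_T(\gamma)$. For the first half, note that $\forall v\,\gamma(v)$ is logically equivalent to $[\forall v\,\theta(\dot{x_0},v)]\wedge[\sigma\!\rightarrow\!\forall v\,\chi(v)]$ (since $v$ is not free in $\sigma$), so that $\neg\forall v\,\gamma(v)$ reduces to $\neg\forall v\,\theta(\dot{x_0},v)\vee[\sigma\wedge\neg\forall v\,\chi(v)]$. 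The propositional tautology $(\neg p\vee q)\wedge(\neg q\vee\neg r)\!\rightarrow\![\neg p\vee(q\wedge\neg r)]$, instantiated at $p=\forall v\,\theta(\dot{x_0},v)$, $q=\sigma$, $r=\forall v\,\chi(v)$, then yields ${\tt Pr}_T(\ulcorner\neg\forall v\,\gamma(v)\urcorner)$ from (iv) and (i) by provable closure of ${\tt Pr}_T$ under logical consequence. Hence $\mho_T(\gamma)$ holds, so $\exists\chi'\,\mho_T(\chi')$, contradicting $\omega\textrm{-}{\tt Con}_T$ and completing the internal argument.

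The genuinely delicate point, and the one I expect to cost the most care, is the second tool: applying provable $\Sigma_1$-completeness to the fixed witness $x_0$, which is an arbitrary element rather than a standard numeral, forces one to carry the numeral-function and substitution machinery through the coding and to check that ${\tt Pr}_T$ provably commutes with the instantiation $\forall y\,\theta(\overline k,y)\!\rightarrow\!\sigma$ uniformly in $k$. The remaining obstacle is pure bookkeeping: expressing the code of $\gamma(v)$ correctly as a term in $x_0$ and $\chi$ so that $\mho_T$ may be applied to it, and verifying the derivability conditions (closure under modus ponens and under first-order logical consequence) that license moving the propositional reasoning inside ${\tt Pr}_T$. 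All of this stays within the standard {\sc Hilbert}--{\sc Bernays}--{\sc L\"ob} apparatus, available already in ${\rm I\Sigma_1}$ and hence in $\mathbb{P}$.
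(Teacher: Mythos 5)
Your proof is correct and follows essentially the same route as the paper's: the paper also avoids formalizing Fact~\ref{fact:1} separately by running a direct argument with the combined witness formula $\theta(x)\wedge[\pi(\overline{u})\!\rightarrow\!\varphi(x)]$, obtained via the Deduction Theorem and $\Sigma_1$-completeness at the witness of $\sigma$ --- your $\gamma(v)=\theta(\dot{x_0},v)\wedge[\sigma\!\rightarrow\!\chi(v)]$ is the same construction up to the harmless replacement of $\pi(u)$ by $\sigma$ in the antecedent. The only difference is one of presentation: the paper states the argument informally and asserts its formalizability, whereas you carry out the internalization explicitly and correctly flag the one genuinely delicate point (provable $\Sigma_1$-completeness applied to a non-standard witness via the numeral function).
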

\begin{proof}

\noindent
The proofs of Theorem~\ref{thm:sigma3} and Fact~\ref{fact:1} can be formalized in $\mathbb{P}$ with some hard work. We now present a more direct proof for Theorem~\ref{thm:sigma3} whose formalizability in $\mathbb{P}$ is straightforward. Suppose that $\omega\textrm{-}{\tt Con}_T$ and that $\sigma$ is a true $\Sigma_3$-sentence. If $\neg\omega\textrm{-}{\tt Con}_{T+\sigma}$ then for some formula $\varphi(x)$ we have $T\!+\!\sigma\vdash\neg\forall x\,\varphi(x)$ and $T\!+\!\sigma\vdash\varphi(\overline{n})$ for every $n\!\in\!\mathbb{N}$. Write $\sigma=\exists x\,\pi(x)$ for a $\Pi_2$-formula $\pi$. Since $\sigma$ is true, then there exists some $u$($\in\!\mathbb{N}$) such that $\pi(u)$ is true. Write $\pi(u)=\forall y\,\theta(y)$ for some $\Sigma_1$-formula $\theta$. Then $\theta(z)$ is true for every $z$. So, by the $\Sigma_1$-completeness of $T$ we have (\ding{92}) $T\vdash\theta(\overline{n})$ for each $n\!\in\!\mathbb{N}$.
For reaching to a contradiction, we show that $T$ is $\omega$-inconsistent and the formula $\theta(x)\wedge[\pi(u)\!\rightarrow\!\varphi(x)]$ is a witness for that.
By Deduction Theorem we have  $T\vdash\sigma\!\rightarrow\!\neg\forall x\,\varphi(x)$ and so  $T\vdash\pi(u)\!\rightarrow\!\neg\forall x\,[\pi(u)\!\rightarrow\!\varphi(x)]$ therefore  $T\vdash\neg\forall y\,\theta(y)\vee\neg\forall x\,[\pi(u)\!\rightarrow\!\varphi(x)]$, thus (i) $T\vdash\neg\forall x\,\big(\theta(x)\wedge[\pi(u)\!\rightarrow\!\varphi(x)]\big)$.
On the other hand, for every $n\!\in\!\mathbb{N}$ we have $T\vdash\pi(u)\!\rightarrow\!\varphi(\overline{n})$, which by (\ding{92}) implies that (ii) $T\vdash\theta(\overline{n})\wedge[\pi(u)
\!\rightarrow\!\varphi(\overline{n})]$ for each $n\!\in\!\mathbb{N}$. Thus, by (i) and (ii) the theory $T$ is $\omega$-inconsistent, a contradiction. So, $T\!+\!\sigma$ must be $\omega$-consistent.
\end{proof}

As a corollary, we show that all the Kreiselian sentences  are $\mathbb{P}$-provably equivalent to one another.

\begin{corollary}[$\mathbb{P}\vdash\kappa\!\equiv\!\neg
\omega\textrm{-}{\tt Con}_{T}$]\label{cor:kk}
\noindent

\noindent
If  $\kappa$ is a Kreiselian sentence of the {\sc re} theory $T$, then $\mathbb{P}\vdash\kappa\!\leftrightarrow\!\neg
\omega\textrm{-}{\tt Con}_{T}$.
\end{corollary}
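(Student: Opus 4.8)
The plan is to derive the biconditional from the defining property of a Kreiselian sentence together with the already formalized Theorem~\ref{thm:formal}, splitting the equivalence into its two implications. Throughout I would reason inside $\mathbb{P}$, using only that $\kappa$ is a $\Sigma_3$-sentence and that, by Definition~\ref{def:kappa}, it satisfies $(\star)$: $\mathbb{P}\vdash\kappa\!\leftrightarrow\!\neg\omega\textrm{-}{\tt Con}_{T+\kappa}$.

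First I would handle the implication $\mathbb{P}\vdash\kappa\!\rightarrow\!\neg\omega\textrm{-}{\tt Con}_T$. Instantiating Theorem~\ref{thm:formal} at the $\Sigma_3$-sentence $\sigma=\kappa$ yields $\mathbb{P}\vdash\kappa\wedge\omega\textrm{-}{\tt Con}_T\!\rightarrow\!\omega\textrm{-}{\tt Con}_{T+\kappa}$. Combining this with the contrapositive of one half of $(\star)$, namely $\mathbb{P}\vdash\omega\textrm{-}{\tt Con}_{T+\kappa}\!\rightarrow\!\neg\kappa$, gives $\mathbb{P}\vdash\kappa\wedge\omega\textrm{-}{\tt Con}_T\!\rightarrow\!\neg\kappa$. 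Since the antecedent trivially proves $\kappa$ as well, the conjunction $\kappa\wedge\omega\textrm{-}{\tt Con}_T$ is $\mathbb{P}$-refutable, which is exactly $\mathbb{P}\vdash\kappa\!\rightarrow\!\neg\omega\textrm{-}{\tt Con}_T$.

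For the converse $\mathbb{P}\vdash\neg\omega\textrm{-}{\tt Con}_T\!\rightarrow\!\kappa$, the key ingredient is a formalized monotonicity of $\omega$-inconsistency under extension: $\mathbb{P}\vdash\neg\omega\textrm{-}{\tt Con}_T\!\rightarrow\!\neg\omega\textrm{-}{\tt Con}_{T+\kappa}$. I would obtain this from the provable inclusion $\mathbb{P}\vdash{\tt Pr}_T(y)\!\rightarrow\!{\tt Pr}_{T+\kappa}(y)$ (every $T$-proof is a $T\!+\!\kappa$-proof), which by the shape of $\mho$ shows that any witness $\chi$ of $\mho_T(\chi)$ is also a witness of $\mho_{T+\kappa}(\chi)$; quantifying over $\chi$ delivers the implication. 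Chaining it with the other half of $(\star)$, namely $\mathbb{P}\vdash\neg\omega\textrm{-}{\tt Con}_{T+\kappa}\!\rightarrow\!\kappa$, yields $\mathbb{P}\vdash\neg\omega\textrm{-}{\tt Con}_T\!\rightarrow\!\kappa$. Putting the two implications together gives $\mathbb{P}\vdash\kappa\!\leftrightarrow\!\neg\omega\textrm{-}{\tt Con}_T$.

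The only genuinely new work is the monotonicity lemma, i.e.\ verifying inside $\mathbb{P}$ that the $\mho$-witness transfers along the inclusion $T\subseteq T\!+\!\kappa$; I expect this formalized provability bookkeeping to be the step requiring care, while everything else reduces to propositional manipulation of $(\star)$ and Theorem~\ref{thm:formal}. It is worth noting that this converse direction does not use that $\kappa$ is $\Sigma_3$, whereas the forward direction relies on it essentially through Theorem~\ref{thm:formal}.
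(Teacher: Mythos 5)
Your proposal is correct and follows essentially the same route as the paper's own proof: the forward implication via Theorem~\ref{thm:formal} applied to $\sigma=\kappa$ combined with the fixpoint $(\star)$, and the converse via monotonicity of $\omega$-inconsistency under extension ($\neg\omega\textrm{-}{\tt Con}_T\rightarrow\neg\omega\textrm{-}{\tt Con}_{T+\kappa}$) followed by $(\star)$ again. The only difference is that you spell out the provability bookkeeping behind the monotonicity step, which the paper leaves implicit.
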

\begin{proof}

\noindent
Argue inside $\mathbb{P}$: If
$\kappa$ then
$\neg\omega\textrm{-}{\tt Con}_{T+\kappa}$ by Definition~\ref{def:kappa}, which implies  $\neg\omega\textrm{-}{\tt Con}_{T}$
by Theorem~\ref{thm:formal} (noting that $\kappa\!\in\!\Sigma_3$);
 therefore, $\kappa$ implies $\neg\omega\textrm{-}{\tt Con}_{T}$.
Conversely, if
$\neg
\omega\textrm{-}{\tt Con}_{T}$ then $\neg
\omega\textrm{-}{\tt Con}_{T+\kappa}$ and so $\kappa$ by Definition~\ref{def:kappa}; therefore, $\neg
\omega\textrm{-}{\tt Con}_{T}$ implies $\kappa$.
\end{proof}

By the $\mathbb{P}$-provable equivalence of $\kappa$ with $\neg\omega\textrm{-}{\tt Con}_{T}$, we have the following corollary which is the $\omega$-version of  {\sc G\"odel}'s Second Incompleteness Theorem, that was first proved by {\sc Rosser} \cite{Rosser37} (see also \cite[p.~{\sf xxxi}]{Boolos}).

\begin{corollary}[$\omega\textrm{-}{\tt Con}_T\Longrightarrow\omega\textrm{-}{\tt Con}_{T+\neg\omega\textrm{-}{\tt Con}_T}$]\label{cor:g2}
\noindent

\noindent
If the {\sc re} theory $T$ is $\omega$-consistent, then so is $T\!+\!\neg\omega\textrm{-}{\tt Con}_T$.
\hfill \ding{113}
\end{corollary}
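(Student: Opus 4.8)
The plan is to deduce this from two results already in hand: the $\mathbb{P}$-provable equivalence of a Kreiselian sentence with $\neg\omega\textrm{-}{\tt Con}_T$ (Corollary~\ref{cor:kk}), and the $\omega$-consistency of $T$ extended by such a sentence (Theorem~\ref{thm:kappa}). First I would use the Diagonal Lemma, as noted after Definition~\ref{def:kappa}, to fix a Kreiselian sentence $\kappa$ of the {\sc re} theory $T$; recall that such a $\kappa$ exists whether or not $T$ is $\omega$-consistent.

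Next, by Corollary~\ref{cor:kk} we have $\mathbb{P}\vdash\kappa\!\leftrightarrow\!\neg\omega\textrm{-}{\tt Con}_T$. Since $\mathbb{P}\!\subseteq\!T$ by our standing convention on theories, the sentences $\kappa$ and $\neg\omega\textrm{-}{\tt Con}_T$ are provably equivalent over $T$ as well. Consequently the two extensions $T\!+\!\kappa$ and $T\!+\!\neg\omega\textrm{-}{\tt Con}_T$ have precisely the same theorems, i.e.\ the same deductive closure.

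The observation I would record explicitly is that $\omega$-consistency depends only on the deductive closure of a theory: Definition~\ref{def:oc} quantifies over formulas $\varphi(x)$ and refers solely to what the theory proves. Hence two theories with the same theorems are $\omega$-consistent or $\omega$-inconsistent together. Applying this to $T\!+\!\kappa$ and $T\!+\!\neg\omega\textrm{-}{\tt Con}_T$, it suffices to establish the $\omega$-consistency of $T\!+\!\kappa$. But that is immediate from Theorem~\ref{thm:kappa}: as $T$ is {\sc re} and $\omega$-consistent and $\kappa$ is Kreiselian, the theory $T\!+\!\kappa$ is $\omega$-consistent, and therefore so is the deductively equivalent theory $T\!+\!\neg\omega\textrm{-}{\tt Con}_T$.

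I do not expect a genuine obstacle here, since all the substance has been front-loaded into Corollary~\ref{cor:kk} (which itself rests on the formalized Theorem~\ref{thm:formal}) and Theorem~\ref{thm:kappa}. The only point deserving a moment's care is the invariance of $\omega$-consistency under passage to a deductively equivalent theory, which I would state as a brief lemma or parenthetical rather than leave tacit, so that the substitution of $\neg\omega\textrm{-}{\tt Con}_T$ for $\kappa$ is fully justified.
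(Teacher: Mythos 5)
Your proposal is correct and is exactly the argument the paper intends: the corollary is stated without a separate proof precisely because, as the preceding sentence notes, it follows from the $\mathbb{P}$-provable equivalence of $\kappa$ with $\neg\omega\textrm{-}{\tt Con}_T$ (Corollary~\ref{cor:kk}) together with the $\omega$-consistency of $T\!+\!\kappa$ from Theorem~\ref{thm:kappa}. Your explicit remark that $\omega$-consistency is invariant under deductive equivalence is the one step the paper leaves tacit, and it is worth recording.
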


Thus far, we have seen some $\omega$-versions of {\sc Lindenbaum}'s lemma and also {\sc G\"odel}'s first and   second incompleteness theorems. We do not claim   novelty for any of these results;\footnote{\!See e.g.\ the \texttt{\scriptsize \url{https://t.ly/GmquO}} link of MathOverFlow whose Proposition 1 (due to {\sc Je\v{r}\'{a}bek}) is half of our Theorem~\ref{thm:sigma3}.} nevertheless, the following $\omega$-version of {\sc Rosser}'s incompleteness theorem seems to be new.

\begin{theorem}[$\omega\textrm{-}{\tt Con}_T\Longrightarrow\exists\rho\!\in\!\Pi_3\textrm{-}
{\rm Th}(\mathbb{N})\!\!:\omega\textrm{-}{\tt Con}_{T+\rho}\wedge\omega\textrm{-}{\tt Con}_{T+\neg\rho}$]\label{thm:rosser}
\noindent

\noindent
If $T$ is an $\omega$-consistent {\sc re} theory, then there exists some true $\Pi_3$-sentence $\rho$ such that both $T\!+\!\rho$ and $T\!+\!\neg\rho$ are $\omega$-consistent.
\end{theorem}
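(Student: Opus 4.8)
The plan is to mimic \textsc{Rosser}'s classical construction one level higher in the hierarchy, replacing ordinary provability by $\omega$-inconsistency and replacing simple consistency (the hypothesis of classical \textsc{Rosser}) by $\omega$-consistency. Using the Diagonal Lemma I would manufacture a self-referential sentence $\rho$ that compares the $\omega$-inconsistency witnesses of $T\!+\!\rho$ with those of $T\!+\!\neg\rho$: writing $\mho_S$ as in Definition~\ref{def:oic} for the varying theory $S$, I would fix
\[ \mathbb{P}\vdash\rho\;\longleftrightarrow\;\forall\chi\,\big[\mho_{T+\rho}(\chi)\rightarrow\exists\chi'\!\leq\!\chi\;\mho_{T+\neg\rho}(\chi')\big], \]
so that $\rho$ reads ``every witness for the $\omega$-inconsistency of $T\!+\!\rho$ is preceded by a witness for that of $T\!+\!\neg\rho$''. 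Since $\mho_S$ is $\Pi_2$ for an \textsc{re} theory $S$, the right-hand side is $\Pi_3$, whence $\rho$ is (provably equivalent to) a $\Pi_3$-sentence as the statement demands; this is the natural analogue of the classical fact that both the \textsc{Rosser} sentence and the consistency statement sit at the $\Pi_1$ level, here shifted up to the $\Pi_3$ level occupied by $\omega\textrm{-}{\tt Con}_T$.

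The verification would proceed by cases governed by Fact~\ref{fact:1}, which guarantees that \emph{at most one} of $T\!+\!\rho$, $T\!+\!\neg\rho$ is $\omega$-inconsistent. I would first argue that $\rho$ is true, and then that neither extension can be $\omega$-inconsistent, by playing the two witness-searches against each other in \textsc{Rosser}'s style: a least witness on one side, fed through the fixed-point equivalence, is meant to produce a witness on the other side of no larger code, so that \emph{both} extensions would be $\omega$-inconsistent, contradicting Fact~\ref{fact:1}. Theorem~\ref{thm:sigma3}, its formalized form Theorem~\ref{thm:formal}, and the $\omega$-version of the second incompleteness theorem (Corollary~\ref{cor:g2}) would serve as the bookkeeping lemmas for passing between the $\Sigma_3$/$\Pi_3$-sentence being adjoined and the $\omega$-(in)consistency of the resulting theory. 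The delicate case—the one the \textsc{Rosser} clause is precisely designed to rule out—is that exactly one side is $\omega$-inconsistent while the comparison clause is vacuously met; confirming that this configuration cannot survive the self-reference is where the real effort concentrates.

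The main obstacle is that the witnessing predicate $\mho_S$ is genuinely $\Pi_2$, not $\Delta_0$ like the $\mathrm{Proof}$-relation underlying the classical ${\tt Pr}$. \textsc{Rosser}'s argument crucially uses $\Sigma_1$-completeness to let the theory \emph{prove} the finitely many bounded facts ``$q$ is not a proof of $\neg\rho$'' once consistency has excluded such proofs; here the corresponding statement ``$\chi'$ does not witness $\omega$-inconsistency'' is $\Sigma_2$ and need not be provable at all—indeed $\omega$-consistency delivers only $\Pi_3$-soundness (Corollary~\ref{cor:pi3}), not $\Sigma_3$-soundness (Corollary~\ref{cor:s3n}), which is exactly the gap exploited in Corollary~\ref{cor:api3}. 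To circumvent this I would race only the $\Sigma_1$ first conjunct ${\tt Pr}_S(\ulcorner\neg\forall v\,\varphi(v)\urcorner)$ of $\mho_S$, comparing the codes of the single proofs of the sentences $\neg\forall v\,\varphi(v)$, while the $\Pi_2$ second conjunct $\forall w\,{\tt Pr}_S(\ulcorner\varphi(\overline{w})\urcorner)$ is supplied for free by the $\Sigma_1$-completeness of $\mathbb{P}$, exactly as the family $\theta(\overline{n})$ was obtained uniformly in the proof of Theorem~\ref{thm:sigma3}. This keeps the raced quantity an honest natural number over which $\mathbb{P}$ can reason with bounded quantifiers, letting the \textsc{Rosser} trick survive the lift from consistency to $\omega$-consistency; I expect the residual work to be the complexity bookkeeping confirming that $\rho$ stays $\Pi_3$ and that the entire comparison can be carried out inside $\mathbb{P}$.
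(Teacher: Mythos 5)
Your overall plan---diagonalize to get a $\Pi_3$ Rosser-type sentence comparing $\omega$-inconsistency witnesses, then use Fact~\ref{fact:1} and a witness race---is the paper's plan, but your fixed point is oriented backwards, and this is fatal rather than cosmetic. The paper's sentence is
$\rho\leftrightarrow\forall\chi\,[\mho_{T+\neg\rho}(\chi)\rightarrow\exists\xi<\chi\,\mho_{T+\rho}(\xi)]$:
the \emph{antecedent} concerns $T+\neg\rho$ and the bounded consequent concerns $T+\rho$ (the correct lift of Rosser, since ``$\rho$ is provable'' corresponds to ``$T+\neg\rho$ is $\omega$-inconsistent''). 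You swapped the two. Check what then happens in the bad scenario where $T+\rho$ is $\omega$-inconsistent (with a standard witness $\boldsymbol\varphi$ for $\mho_{T+\rho}$) while, by Fact~\ref{fact:1}, $T+\neg\rho$ is $\omega$-consistent: with your orientation, $\neg\rho$ asserts ``some witness for $\mho_{T+\rho}$ has no matching witness for $\mho_{T+\neg\rho}$,'' which in that scenario is simply \emph{true} in $\mathbb{N}$ (witnessed by $\boldsymbol\varphi$, since the $\omega$-consistent side has no witnesses at all). So the theory $T+\neg\rho$ proves a true sentence and no contradiction can be extracted; symmetrically, in the other bad scenario your $\rho$ is vacuously true and instantiating it at the available witness (which sits in the \emph{consequent}, not the antecedent) yields nothing. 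The race only works when the standard witness handed to you by the assumed $\omega$-inconsistency of one side plugs into the antecedent (or bounds the existential) of the sentence that the \emph{other}, $\omega$-consistent, side proves, forcing that side to prove a \emph{false} bounded statement about its own witnesses.

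Your handling of the ``$\mho$ is $\Pi_2$, not $\Delta_0$'' obstacle is also off. You propose to race only the $\Sigma_1$ conjunct ${\tt Pr}_S(\ulcorner\neg\forall v\varphi(v)\urcorner)$ and to get the conjunct $\forall w\,{\tt Pr}_S(\ulcorner\varphi(\overline{w})\urcorner)$ ``for free by $\Sigma_1$-completeness''; but $\Sigma_1$-completeness only yields each instance ${\tt Pr}_S(\ulcorner\varphi(\overline{n})\urcorner)$ for standard $n$, never the universally quantified $\Pi_2$ statement itself (that is exactly the $\Sigma_1$/$\Pi_2$ gap the whole paper turns on). The paper's actual device is different: since $\omega$-consistency gives $\Pi_3$-soundness (Corollary~\ref{cor:pi3}), the $\omega$-consistent side is consistent with the oracle $\Pi_2\textrm{-}{\rm Th}(\mathbb{N})$; one works in $U=T\pm\rho+\Pi_2\textrm{-}{\rm Th}(\mathbb{N})$, where the true $\Pi_2$-sentence $\mho(\boldsymbol\varphi)$ coming from the assumed $\omega$-inconsistency is available as an axiom, and where the true $\Sigma_2$-sentence $\forall\xi\!<\!\boldsymbol\varphi\,\neg\mho(\xi)$ coming from the $\omega$-consistency of the other side is derivable; these two collide and show $U$ inconsistent. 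Note also that the contradiction is with the consistency of $U$, not (as you suggest) with Fact~\ref{fact:1} via both extensions becoming $\omega$-inconsistent. To repair your proposal: flip the fixed point to the paper's orientation and replace your $\Sigma_1$-conjunct trick with the $\Pi_2\textrm{-}{\rm Th}(\mathbb{N})$ oracle.
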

\begin{proof}

\noindent
By  Diagonal Lemma there exists a $\Pi_3$-sentence $\rho$ such that (see Definition~\ref{def:oic})

\qquad \qquad \qquad (\ding{99}) \quad {$\mathbb{P}\vdash\rho\leftrightarrow
\forall\chi\,[\mho_{T+\neg\rho}(\chi)\!\rightarrow\!
\exists\xi\!<\!\chi\,\mho_{T+\rho}(\xi)]$.}

\begin{enumerate}
  \item[{\large \textgoth{a}}.] We first show that $T\!+\!\rho$ is $\omega$-consistent.

      Assume not; then for some (fixed, standard) formula $\boldsymbol\varphi(x)$ the $\Pi_2$-sentence $\mho_{T+\rho}(\boldsymbol\varphi)$ is true, so we have  $\Pi_2\textrm{-}{\rm Th}(\mathbb{N})\vdash\mho_{T+\rho}(\boldsymbol\varphi)$. Also, by Fact~\ref{fact:1},  $T\!+\!\neg\rho$ is $\omega$-consistent. Thus, $U=T\!+\!\neg\rho\!+\!\Pi_2\textrm{-}{\rm Th}(\mathbb{N})$ is consistent by Corollary~\ref{cor:pi3}. Now, by (\ding{99}) we have
{$U\vdash
\exists\chi[\mho_{T+\neg\rho}(\chi)\!\wedge\!
\forall\xi\!<\!\chi\neg\mho_{T+\rho}(\xi)]$.}
Since   $U\vdash\mho_{T+\rho}(\boldsymbol\varphi)$ then we have    $U\vdash
\exists\chi\!\leqslant\!\boldsymbol\varphi\,\mho_{T+\neg\rho}(\chi)$.
But by the $\omega$-consistency of the theory $T+\neg\rho$, the $\Sigma_2$-sentence $\forall\chi\!\leqslant\!\boldsymbol\varphi
\neg\mho_{T+\neg\rho}(\chi)$ is true, and so should be $\Pi_2\textrm{-}{\rm Th}(\mathbb{N})$-provable. Thus,   $U$ is inconsistent; a contradiction. Therefore, the theory  $T\!+\!\rho$ must be $\omega$-consistent.

  \item[{\large \textgoth{b}}.] We now show that $T\!+\!\neg\rho$ is $\omega$-consistent.

If not, then by Fact~\ref{fact:1},  $T\!+\!\rho$ should be $\omega$-consistent, and so $U=T\!+\!\rho\!+\!\Pi_2\textrm{-}{\rm Th}(\mathbb{N})$ should be consistent by Corollary~\ref{cor:pi3}. Also, for some   formula $\boldsymbol\varphi(x)$ we should have $\Pi_2\textrm{-}{\rm Th}(\mathbb{N})\vdash\mho_{T+\neg\rho}(\boldsymbol\varphi)$.
Now, by (\ding{99}) we have $U\vdash\exists\xi\!<\!\boldsymbol\varphi\,\mho_{T+\rho}(\xi)$.
But $\forall\xi\!<\!\boldsymbol\varphi\,\neg\mho_{T+\rho}(\xi)$ is a true $\Sigma_2$-sentence by the $\omega$-consistency of the theory $T\!+\!\rho$. So,   $\forall\xi\!<\!\boldsymbol\varphi\,\neg\mho_{T+\rho}(\xi)$ is $\Pi_2\textrm{-}{\rm Th}(\mathbb{N})$-provable, which implies that $U$ is inconsistent; a contradiction. Therefore, the theory $T\!+\!\neg\rho$ must be $\omega$-consistent too.
\end{enumerate}
So, both  $T\!+\!\rho$ and $T\!+\!\neg\rho$ are $\omega$-consistent, whence the $\Pi_3$-sentence $\rho$ is true (by the soundness of $\mathbb{P}$).
\end{proof}

Note that Theorem~\ref{thm:rosser} is optimal in a sense, since by Theorem~\ref{thm:sigma3} for no true $\Sigma_3$-sentence $\sigma$ can the theory $T\!+\!\neg\sigma$ be $\omega$-consistent.
We end the paper with the observation that Theorem~\ref{thm:rosser} can be formalized in $\mathbb{P}$.

\begin{corollary}[$\omega\textrm{-}{\tt Con}_T\rightarrow\rho\not\rightarrow\omega\textrm{-}{\tt Con}_T$]\label{cor:formalr}
\noindent

\noindent
If  $\rho$ is a $\Pi_3$-sentence that was constructed for the {\sc re}  theory $T$ in the Proof of Theorem~\ref{thm:rosser} (\ding{99}), then

(1) $\mathbb{P}\vdash\omega\textrm{-}{\tt Con}_T \longrightarrow \rho \wedge \omega\textrm{-}{\tt Con}_{T+\rho} \wedge \omega\textrm{-}{\tt Con}_{T+\neg\rho}$, and

(2) if $T$ is $\omega$-consistent, then $T\nvdash\rho\rightarrow\omega\textrm{-}{\tt Con}_T$; moreover, $T\!+\!\rho\!+\!\neg\omega\textrm{-}{\tt Con}_T$ is $\omega$-consistent.
\end{corollary}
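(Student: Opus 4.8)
The plan is to split the Corollary into its two parts: (1) is the claim that the proof of Theorem~\ref{thm:rosser} is finitary enough to run inside $\mathbb{P}$, and (2) is a short deduction from (1) together with the already-formalized $\omega$-version of the Second Incompleteness Theorem, Corollary~\ref{cor:g2}. For (1) the first move I would make is to excise the non-{\sc re} object $\Pi_2\textrm{-}{\rm Th}(\mathbb{N})$ from the original argument and replace the two appeals to it --- ``a true $\Sigma_2$-sentence is $\Pi_2\textrm{-}{\rm Th}(\mathbb{N})$-provable'' and ``$U$ is consistent by Corollary~\ref{cor:pi3}'' --- by a single internal use of the partial $\Pi_3$-truth predicate $\mathrm{Tr}_{\Pi_3}$ that $\mathbb{P}$ provably carries, together with the formalized $\Pi_3$-soundness of $\omega$-consistent theories (the formalized Corollary~\ref{cor:pi3}, which rests on the already-established Theorem~\ref{thm:formal}) and the formalized Fact~\ref{fact:1}.

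Reasoning inside $\mathbb{P}$ from the hypothesis $\omega\textrm{-}{\tt Con}_T$, I would then carry out clauses \textgoth{a} and \textgoth{b} of Theorem~\ref{thm:rosser}. For \textgoth{a}, assume $\neg\omega\textrm{-}{\tt Con}_{T+\rho}$ and fix $c$ with $\mho_{T+\rho}(c)$; formalized Fact~\ref{fact:1} yields $\omega\textrm{-}{\tt Con}_{T+\neg\rho}$, whence $T\!+\!\neg\rho$ is $\Pi_3$-sound. Since $T\!+\!\neg\rho\vdash\neg\rho$, the diagonal (\ding{99}) gives $T\!+\!\neg\rho\vdash\exists\chi[\mho_{T+\neg\rho}(\chi)\wedge\forall\xi\!<\!\chi\,\neg\mho_{T+\rho}(\xi)]$, and the witness $\chi$ it supplies must satisfy $\chi\!\leqslant\!\overline{c}$ (else $\overline{c}\!<\!\chi$ would force $\neg\mho_{T+\rho}(\overline{c})$); hence, provably in $\mathbb{P}$ and uniformly in $c$, $T\!+\!\neg\rho\vdash\mho_{T+\rho}(\overline{c})\!\rightarrow\!\exists\chi\!\leqslant\!\overline{c}\,\mho_{T+\neg\rho}(\chi)$. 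This proved sentence is $\Pi_3$, so by $\Pi_3$-soundness it is true; its antecedent $\mho_{T+\rho}(c)$ is true by hypothesis, so its consequent holds, contradicting $\omega\textrm{-}{\tt Con}_{T+\neg\rho}$. Thus $\omega\textrm{-}{\tt Con}_{T+\rho}$. Clause \textgoth{b} is the mirror image, using $T\!+\!\rho\vdash\rho$ and the same truth step for the $\Pi_3$-sound theory $T\!+\!\rho$. Finally $\rho$ itself drops out, because $\omega\textrm{-}{\tt Con}_{T+\neg\rho}$, i.e.\ $\forall\chi\,\neg\mho_{T+\neg\rho}(\chi)$, makes the matrix of (\ding{99}) vacuously true. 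This establishes (1).

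Part (2) is then a quick consequence of (1). First I would record the equivalence $\mathbb{P}\vdash\omega\textrm{-}{\tt Con}_T\!\leftrightarrow\!\omega\textrm{-}{\tt Con}_{T+\rho}$: the direction ``$\rightarrow$'' is one of the conjuncts of (1), while ``$\leftarrow$'' is the $\mathbb{P}$-provable monotonicity $\neg\omega\textrm{-}{\tt Con}_T\!\rightarrow\!\neg\omega\textrm{-}{\tt Con}_{T+\rho}$, since any witness $\mho_T(\varphi)$ persists as $\mho_{T+\rho}(\varphi)$ via $\mathbb{P}\vdash{\tt Pr}_T(x)\!\rightarrow\!{\tt Pr}_{T+\rho}(x)$. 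As $\mathbb{P}\subseteq T$, the theory $T\!+\!\rho$ then proves $\neg\omega\textrm{-}{\tt Con}_T\!\leftrightarrow\!\neg\omega\textrm{-}{\tt Con}_{T+\rho}$, so $T\!+\!\rho\!+\!\neg\omega\textrm{-}{\tt Con}_T$ and $T\!+\!\rho\!+\!\neg\omega\textrm{-}{\tt Con}_{T+\rho}$ are deductively the same theory. Now $T\!+\!\rho$ is {\sc re} and $\omega$-consistent (Theorem~\ref{thm:rosser}), so Corollary~\ref{cor:g2} applied to it makes $T\!+\!\rho\!+\!\neg\omega\textrm{-}{\tt Con}_{T+\rho}$ --- and hence $T\!+\!\rho\!+\!\neg\omega\textrm{-}{\tt Con}_T$ --- $\omega$-consistent, which is the ``moreover'' clause. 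The first assertion then follows at once: if $T\vdash\rho\!\rightarrow\!\omega\textrm{-}{\tt Con}_T$, the theory $T\!+\!\rho\!+\!\neg\omega\textrm{-}{\tt Con}_T$ would be inconsistent, contradicting its $\omega$-consistency, so $T\nvdash\rho\!\rightarrow\!\omega\textrm{-}{\tt Con}_T$.

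The hard part will lie entirely in (1), and it is bookkeeping rather than ideas. I expect three points to need care: that $\mathbb{P}$ genuinely proves the formalized deduction ``$T\!+\!\neg\rho\vdash\mho_{T+\rho}(\overline{c})\!\rightarrow\!\exists\chi\!\leqslant\!\overline{c}\,\mho_{T+\neg\rho}(\chi)$'' uniformly in the variable $c$ (a formalized-Deduction-Theorem argument layered on top of (\ding{99})); that the displayed implications are provably $\Pi_3$ --- so that the bounded quantifier $\exists\chi\!\leqslant\!\overline{c}$ is absorbed without raising the level and the $\mathrm{Tr}_{\Pi_3}$-biconditionals and formalized $\Pi_3$-soundness can be applied --- which is where the complexity accounting lives; and, underlying everything, that Fact~\ref{fact:1} is available in its formalized shape, the one ingredient the paper itself flags as obtainable only ``with some hard work''. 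Once these are pinned down, both clauses and the deduction of (2) are routine.
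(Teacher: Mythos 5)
Your proposal is correct, and for part (2) it coincides with the paper's own argument: the paper likewise observes that $T\!+\!\rho\!+\!\neg\omega\textrm{-}{\tt Con}_{T+\rho}$ is $\omega$-consistent by Theorem~\ref{thm:rosser} plus Corollary~\ref{cor:g2} and then transfers this along $\mathbb{P}\vdash\neg\omega\textrm{-}{\tt Con}_{T+\rho}\leftrightarrow\neg\omega\textrm{-}{\tt Con}_T$ (you are slightly more careful in noting that only one direction of that biconditional comes from part (1), the other being the trivial monotonicity of $\omega$-inconsistency under extension --- the paper elides this). The real difference is in part (1), for which the paper offers no proof at all beyond the remark that Theorem~\ref{thm:rosser} ``can be formalized in $\mathbb{P}$''; you supply an actual formalization, and your key move --- eliminating the non-arithmetical oracle theory $U=T\!\pm\!\rho\!+\!\Pi_2\textrm{-}{\rm Th}(\mathbb{N})$ from the original proof in favour of the partial truth predicate ${\rm Tr}_{\Pi_3}$ and a formalized, \emph{uniform} version of Corollary~\ref{cor:pi3} --- is not merely cosmetic but necessary, since $\mathbb{P}$ cannot name that non-{\sc re} theory. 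The one point I would press you on is the uniformity you need: Theorem~\ref{thm:formal} as stated holds for each fixed standard $\Sigma_3$-sentence $\sigma$, whereas your truth step applies $\Pi_3$-soundness to the sentence $\mho_{T+\rho}(\overline{c})\rightarrow\exists\chi\!\leqslant\!\overline{c}\,\mho_{T+\neg\rho}(\chi)$ whose code depends on a possibly nonstandard $c$, so you must check (as you half-acknowledge in your closing paragraph) that the proof of Theorem~\ref{thm:formal} goes through with $\sigma$ a quantified code inside $\mathbb{P}$, using formalized provable $\Sigma_1$-completeness; this works, but it is the load-bearing piece of ``bookkeeping'' and deserves to be stated as a separate lemma rather than absorbed into a caveat.
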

\begin{proof}

\noindent
 For part  (2) it suffices to note that since $T\!+\!\rho\!+\!\neg\omega\textrm{-}{\tt Con}_{T+\rho}$ is $\omega$-consistent by Theorem~\ref{thm:rosser} and Corollary~\ref{cor:g2}, and  $\mathbb{P}\vdash\neg\omega\textrm{-}{\tt Con}_{T+\rho}\leftrightarrow\neg\omega\textrm{-}{\tt Con}_T$ by part (1), then  $T\!+\!\rho\!+\!\neg\omega\textrm{-}{\tt Con}_T$
 is $\omega$-consistent as well.
\end{proof}

%\paragraph{\textsf{\textsc{Acknowledgements:}}}
%This research is supported by the Office of the Vice Chancellor for Research and Technology, University of Tabriz, IRAN.


\begin{thebibliography}{99}

\bibitem{Boolos}
 \textsc{Boolos, George} (1993); {\em The Logic of Provability}, Cambridge University Press.


\bibitem{Godel}
 \textsc{G\"odel, Kurt}  (1931);
{\rm ``On Formally Undecidable Propositions of {\em Principia Mathematica} and Related Systems, I''}, in: S. Feferman, {\em et al.} (eds.), {\em Kurt G\"odel Collected Works, Volume I: Publications 1929--1936},
  Oxford University Press (1986), pp. 145--195.

\bibitem{Henkin}
\textsc{Henkin, Leon} (1954);
{\rm A Generalization of the Concept of $\omega$-Consistency},
{\em The Journal of Symbolic Logic} 19:3, pp. 183--196.




\bibitem{Isaacson}
{\sc Isaacson, Daniel} (2011);
{\rm ``Necessary and Sufficient Conditions for Undecidability of the G\"odel Sentence and Its Truth''}, in: D. DeVidi,  {\em et al.} (eds.),  {\em Logic, Mathematics, Philosophy, Vintage Enthusiasms: Essays in Honour of John L. Bell}, Springer, pp. 135--152.


\bibitem{Kreisel}
\textsc{Kreisel, Georg} (1955);
Review of \cite{Henkin} in
{\em Mathematical Reviews}
(AMS MathSciNet, MR0063324).


\bibitem{Murawski}
\textsc{Murawski, Roman} (1998);
{\rm Undefinability of Truth. The Problem of Priority:\ Tarski vs G\"odel}, {\em History and Philosophy of Logic}  19:3, pp.  153--160.



\bibitem{Rosser36}
\textsc{Rosser, Barkley} (1936);
{\rm Extensions of Some Theorems of G\"odel and Church},
{\em The Journal of Symbolic Logic} 1:3, pp.  87--91.


\bibitem{Rosser37}
\textsc{Rosser, Barkley} (1937);
{\rm G\"odel Theorems for Non-Constructive Logics},
{\em The Journal of Symbolic Logic} 2:3, pp. 129--137.


\bibitem{SalSer}
\textsc{Salehi, Saeed \& Seraji, Payam} (2017);
{\rm G\"odel-Rosser's Incompleteness Theorem, Generalized and Optimized for Definable Theories},
 {\em Journal of Logic and Computation} 27:5, pp.   1391–1397.

\bibitem{Smorynski}
\textsc{Smory\'{n}ski, Craig} (1985);
 {\em Self-Reference and Modal Logic}, Springer.




\end{thebibliography}
\end{document}